\begin{document}
\title[Fredholm realizations]
{Fredholm realizations of elliptic symbols on manifolds with boundary II: fibered boundary}

\author{Pierre Albin}
\author{Richard Melrose}
\address{Department of Mathematics, Massachusetts Institute of Technology}
\email{pierre@math.mit.edu}
\email{rbm@math.mit.edu}

\thanks{2000 Mathematics Subject Classification: Primary 58J20, Secondary 19K56, 58J32, 58J40 \\
The first author was partially supported by an NSF postdoctoral fellowship and the second author received partial support under NSF grant DMS-0408993.}

\begin{abstract}
We consider two calculi of pseudodifferential operators on manifolds with fibered boundary: Mazzeo's edge calculus, which has as local model the operators associated to products of closed manifolds with asymptotically hyperbolic spaces, and the $\phi$ calculus of Mazzeo and the second author, which is similarly modeled on products of closed manifolds with asymptotically Euclidean spaces.
We construct an adiabatic calculus of operators interpolating between them, and use this to compute the `smooth' K-theory groups of the edge calculus, determine the existence of Fredholm quantizations of elliptic symbols, and establish a families index theorem in K-theory.
\end{abstract}

\maketitle

\section*{Introduction} \paperintro

If the boundary of a manifold is the total space of a fibration
\begin{equation}\label{BdyFibration}
	\xymatrix{
	Z \ar@{-}[r] & \pa X \ar[d]^{\Phi} \\ & Y }
\end{equation}
one can quantize an invertible symbol
\begin{equation*}
	\sigma \in \CI( S^*X; \hom(\pi^*E, \pi^*F) )
\end{equation*}
as an elliptic pseudodifferential operator in the $\fB{\Phi}$ or edge calculus, $\Psi^0_{\fB{\Phi}}(X;E,F)$, introduced in \cite{Mazzeo:Edge} 
or alternately as an elliptic operator in the $\fC{\Phi}$ or $\phi$ calculus, $\Psi^0_{\fC{\Phi}}(X;E,F)$, introduced in \cite{Mazzeo-Melrose1}.
As on a closed manifold, either of these operators will induce a bounded operator acting between natural $L^2$-spaces of sections but, in contrast to closed manifolds, these operators need not be Fredholm.

A well-known result of Atiyah and Bott \cite{Atiyah-Bott} established that for a differential operator on a manifold with boundary $X$ to admit local elliptic boundary conditions it is necessary and sufficient for the K-theory class of its symbol $[\sigma] \in K_c(T^*X)$ to map to zero under the natural map 
\begin{equation*}
	K_c(T^*X) \to K_c^1(T^*\pa X).
\end{equation*}
At one extreme, when $Z = \{\pt\}$, the $\fB{\Phi}$ calculus is the zero calculus of \cite{Mazzeo:Hodge} and the $\fC{\Phi}$ calculus is the scattering calculus of \cite{Melrose:Scat}, and for these the vanishing of the Atiyah-Bott obstruction is equivalent to the existence of a Fredholm quantization of $\sigma$.
At the other extreme, when $Y = \{\pt\}$, the $\fB{\Phi}$ calculus is the $b$-calculus of \cite{APSBook} and the $\fC{\Phi}$ calculus is the cusp calculus of \cite{Mazzeo-Melrose1}, and in either case there is no obstruction to finding a Fredholm realization of an elliptic symbol $\sigma$. 
The case of general $\phi$ is intermediate between these two extremes.
Indeed, it was established in \cite{Melrose-Rochon} that one can use $\Phi$ and the families index to induce a map $K_c(T^*X) \to K_c^1(T^*Y)$ (see \S\ref{sec:SixTerm}) and that $\sigma$ has a Fredholm quantization in the $\fC{\Phi}$ calculus if and only if
\begin{equation*}
	[\sigma] \in \ker \lrpar{ K_c(T^*X) \to K_c^1(T^*Y) }.
\end{equation*}
In this paper the corresponding result for the edge calculus is established.
We point out that another extension of the Atiyah-Bott obstruction, to a class of operators on stratified manifolds, is discussed in \cite{NSS}.

To this end we consider the even `smooth K-theory' group of the edge calculus $\cK_{\fB{\Phi}}(X)$ consisting of equivalence classes of Fredholm edge operators under the relations generated by bundle stabilization, bundle isomorphisms, and smooth homotopy (see \S \ref{Background}). This has a natural subgroup $\cK_{\fB{\Phi}, -\infty}(X)$ of equivalence classes with principal symbol the identity, and there are analogous odd smooth K-theory groups defined by suspension. 

The corresponding groups are defined and identified for the $\fC{\Phi}$ calculus in \cite{Melrose-Rochon}.
The analysis of these groups for the $\fC{\Phi}$ calculus is simpler for two related reasons. The first is that the $\fC{\Phi}$ calculus is `asymptotically normally commutative' whereas the $\fB{\Phi}$ calculus is `asymptotically normally non-commutative'. More precisely, the behavior of a $\fC{\Phi}$ operator near the boundary is modeled by a family of operators, parametrized by $Y$, acting on the Lie group $\bbR^{h+1}$ times the closed manifold $Z$. On the other hand, the behavior of a $\fB{\Phi}$ operator near the boundary is modeled by a family of operators, parametrized by $Y$, acting on the Lie group $\bbR^+ \ltimes \bbR^{h}$ times the closed manifold $Z$.
To see the effect of this difference on the analysis of these calculi, one can compare the relative simplicity of identifying $\cK_{\mathrm{sc}}(X)$ in \cite[\S 2]{Melrose-Rochon} versus the corresponding identification of $\cK_{0}(X)$ in \cite{Albin-Melrose}.

The other related simplification is that the $\fC{\Phi}$ calculus admits a smooth functional calculus, while the $\fB{\Phi}$ calculus does not. This means that for the $\fC{\Phi}$ calculus one can study the smooth K-theory groups using much the same constructions one would use to study the K-theory of its $C^*$-algebra (see for instance the constructions used in \cite[\S 4]{Melrose-Rochon} to define KK-classes).
One could remove this difficulty by passing to a $C^*$-closure of the $\fB{\Phi}$ calculus, but at the considerable cost of losing the smooth structure.

Instead, we will study the smooth K-theory groups of the $\fB{\Phi}$ calculus by constructing an `adiabatic' calculus of pseudodifferential operators interpolating between the $\fB{\Phi}$ and $\fC{\Phi}$ calculi. This induces maps, labeled $\ad$, between their smooth K-theory groups. 
We work more generally in the context of a fibration $X - M \xrightarrow{\phi} B$ where the fibers are manifolds with fibered boundaries, thus altogether we have
\begin{equation}\label{fullfib}
	\xymatrix { & & X \ar@{-}[r] & M \ar[d]^{\phi} \\
	Z \ar@{-}[r] & \pa X \ar[d]  \ar@{^{(}->}[ur] \ar@{-}[r] 
		& \pa M \ar[d]^{\Phi}  \ar@{^{(}->}[ur]  & B \\
	& Y \ar@{-}[r] & D \ar[ur] & }
\end{equation}
Using our analysis of the smooth K-theory groups of the zero calculus in \cite{Albin-Melrose}, we establish the following theorem.

\begin{theorem}\label{MainThm}
The maps
\begin{equation*}
	\cK_{\fC{\Phi}}(\phi) \xrightarrow{\ad} \cK_{\fB{\Phi}}(\phi), \quad
	\cK_{\fC{\Phi}}^1(\phi) \xrightarrow{\ad} \cK_{\fB{\Phi}}^1(\phi)
\end{equation*}
are isomorphisms, and restrict to isomorphisms 
\begin{equation*}
	\cK_{\fC{\Phi},-\infty}(\phi) \xrightarrow{\ad} \cK_{\fB{\Phi},-\infty}(\phi), \quad
	\cK_{\fC{\Phi},-\infty}^1(\phi) \xrightarrow{\ad} \cK_{\fB{\Phi},-\infty}^1(\phi).
\end{equation*}
Furthermore, these groups fit into a commutative diagram
\begin{equation*}
\xymatrix  @R=17pt @C=17pt {
\cK^0_{\fC{\Phi},-\infty}\lrpar{\phi} \ar[rr] \ar[rd]^{\ad} & & 
\cK^0_{\fC{\Phi}}\lrpar{\phi} \ar[rr] \ar[d]^{\ad} & & 
K^0_c\lrpar{T^*M/B} \ar@{->}[ddd] \ar@{<->}[ld]^= \\
& \cK^0_{\fB{\Phi},-\infty}\lrpar{\phi} \ar[r] & 
\cK^0_{\fB{\Phi}}\lrpar{\phi} \ar[r] &
K^0_c\lrpar{T^*M/B} \ar[d] & \\
& K^1_c\lrpar{T^*M/B} \ar[u] &
\cK^1_{\fB{\Phi}}\lrpar{\phi} \ar[l] &
\cK^1_{\fB{\Phi},-\infty}\lrpar{\phi} \ar[l] & \\
K^1_c\lrpar{T^*M} \ar@{->}[uuu] \ar@{<->}[ur]^= & &
\cK^1_{\fC{\Phi}}\lrpar{\phi} \ar[ll] \ar[u]^{\ad} & &
\cK^1_{\fC{\Phi},-\infty}\lrpar{\phi} \ar[ll] \ar[lu]^{\ad} \\}
\end{equation*}
wherein the inner and outer six term sequences are exact.
\end{theorem}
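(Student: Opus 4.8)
The plan is to exhibit the diagram as the five lemma comparison of two six term exact sequences joined by the adiabatic maps, with essentially all of the analytic content concentrated in the residual (symbol-trivial) groups.

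\emph{The two hexagons.} The outer hexagon is the smooth K-theory six term sequence of the $\fC{\Phi}$ calculus from \cite{Melrose-Rochon}: the maps out of $\cK^0_{\fC{\Phi}}(\phi)$ and $\cK^1_{\fC{\Phi}}(\phi)$ are the principal symbol, and the connecting maps out of $K^0_c(T^*M/B)$ are the obstruction to a Fredholm $\fC{\Phi}$ quantization. For the inner hexagon I would run the corresponding argument for the edge calculus. The principal symbol sequence
\begin{equation*}
	0\to \Psi^{-\infty}_{\fB{\Phi}}(\phi)\to \Psi^0_{\fB{\Phi}}(\phi)\xrightarrow{\ \sigma\ } \CI\lrpar{S^*(M/B);\hom(E,F)}\to 0,
\end{equation*}
together with the normal-operator homomorphism on $\Psi^{-\infty}_{\fB{\Phi}}(\phi)$, whose invertibility is (with ellipticity) equivalent to the Fredholm property on the natural $L^2$ spaces, gives, exactly as in the treatment of the zero calculus in \cite{Albin-Melrose}, the six term sequence
\begin{equation*}
	\cK^0_{\fB{\Phi},-\infty}(\phi)\to \cK^0_{\fB{\Phi}}(\phi)\to K^0_c(T^*M/B)\to \cK^1_{\fB{\Phi},-\infty}(\phi)\to \cK^1_{\fB{\Phi}}(\phi)\to K^1_c(T^*M/B)\to\cK^0_{\fB{\Phi},-\infty}(\phi),
\end{equation*}
the even and odd pieces being interchanged by one-dimensional suspension.

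\emph{The adiabatic comparison.} The adiabatic calculus on $M\times[0,1]_\varepsilon$ restricts to $\Psi_{\fB{\Phi}}(\phi)$ at $\varepsilon=1$ and to $\Psi_{\fC{\Phi}}(\phi)$ at $\varepsilon=0$, is closed under composition, and has the property that a fully elliptic adiabatic family is Fredholm uniformly as $\varepsilon\to 0$; evaluation at the two endpoints therefore sends a homotopy class of fully elliptic adiabatic families to the pair consisting of a Fredholm $\fC{\Phi}$ operator and a Fredholm $\fB{\Phi}$ operator, which is the map $\ad$. By construction $\ad$ intertwines the two hexagons: the adiabatic principal symbol restricts to the $\fC{\Phi}$ and the $\fB{\Phi}$ symbols, and under the canonical identification of the two symbol spaces these are the same class in $K^*_c(T^*M/B)$, so the symbol squares commute with the identity on the topological corners, while commutation with the residual maps is immediate. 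Granting that $\ad$ is an isomorphism on the residual groups $\cK_{\fC{\Phi},-\infty}(\phi)\to\cK_{\fB{\Phi},-\infty}(\phi)$ and its odd analogue, the five lemma applied to this morphism of six term sequences shows $\ad$ is an isomorphism on $\cK^0_{\fB{\Phi}}(\phi)$ and $\cK^1_{\fB{\Phi}}(\phi)$ as well, and transports exactness of the outer hexagon to the inner one; all the assertions of the theorem follow.

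\emph{The residual isomorphism.} It remains to treat $\cK_{\fC{\Phi},-\infty}(\phi)\xrightarrow{\ad}\cK_{\fB{\Phi},-\infty}(\phi)$ and its suspension, and this is the analytic heart of the matter. A symbol-trivial operator is Fredholm exactly when its normal operator is invertible, so each residual group depends only on the normal-operator model: for $\fC{\Phi}$ a family over $D/B$ of suspended operators on the abelian group $\bbR^{h+1}$ times the fiber $Z$, for $\fB{\Phi}$ the corresponding family on the solvable group $\bbR^+\ltimes\bbR^h$ times $Z$, with the adiabatic limit deforming the former model into the latter. I would show that this deformation is an isomorphism of smooth K-theory by combining the identification of the residual groups of the zero calculus in \cite{Albin-Melrose}---which is precisely the case $Z=\{pt\}$ and supplies the solvable model---with a suspension and Mayer--Vietoris reduction that absorbs the closed fiber $Z$ and the parameter base $D/B$, checking at each stage that the resulting isomorphism is the one induced by $\ad$. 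The crux---and the step I expect to be the main obstacle---is precisely the invariance of these K-groups under passage from the abelian to the solvable normal model: the adiabatic calculus is used here not merely to deform algebras but to realize an actual equivalence of the associated smooth K-theories, and it is this ``asymptotically normally non-commutative'' phenomenon flagged in the introduction that makes a direct comparison of the two calculi unavailable and forces the detour through the adiabatic calculus.
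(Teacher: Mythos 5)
Your overall architecture coincides with the paper's: construct the adiabatic calculus and the map $\ad$, prove the residual isomorphism $\cK^*_{\fC{\Phi},-\infty}(\phi)\to\cK^*_{\fB{\Phi},-\infty}(\phi)$, establish both six term sequences, and apply the five lemma. The genuine gap is exactly where you flag it: the residual isomorphism is asserted rather than proved, and the reduction you propose for it is not the one that works. A Mayer--Vietoris/suspension argument ``absorbing the closed fiber $Z$'' has no evident starting point, since the residual groups are not presented as a cohomology theory in $Z$. What the paper does instead (Lemma \ref{ExcisionLemma}, following \cite{Melrose-Rochon} and \cite{Rochon}) is an excision by finite-rank approximation: an invertible perturbation $\Id+b$ with $b\in\Psi^{-\infty}_{\Phi-sus}(\pa M/D;E)$ can be deformed so that $b$ acts on a finite-rank subbundle $W\subset\CI(\pa M/D)$, whence the fiber $Z$ is traded for the coefficient bundle $W$ and the problem collapses to the scattering-versus-zero case on $D\times[0,1]$. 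Only after this reduction does the $Z=\{pt\}$ input from \cite{Albin-Melrose} apply. The remaining content (Theorem \ref{Thm3Sc}) is to check that the two identifications with $K^*_c(T^*\pa M/B)$ --- the Fourier-transform/commutative one on the scattering side and the \cite{Albin-Melrose} one on the zero side --- are intertwined by $\ad$; the paper does this by quantizing the kernel \eqref{InftyKernel} into the $b,c$-calculus with the parameter $\eps$, so that the commutative product at $\eps=0$ deforms smoothly to the noncommutative one, the agreement of the resulting K-classes being an instance of Bott periodicity as in \cite{HHH}. None of this mechanism appears in your sketch, and it is the analytic heart of the theorem.

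A second, smaller gap concerns the inner hexagon. For the five lemma you need it exact, and ``exactly as in \cite{Albin-Melrose}'' does not supply this for general $Z$. In the paper the connecting map $I_q:K^q_c(T^*M/B)\to\cK^{q+1}_{\fB{\Phi},-\infty}(\phi)$ is the boundary families index composed with the identification $\cK^{q+1}_{\fB{\Phi},-\infty}(\phi)\cong K^{q+1}_c(T^*D/B)$, and exactness at $K^q_c(T^*M/B)$ and at $\cK^{q+1}_{\fB{\Phi},-\infty}(\phi)$ is proved by taking the $\fC{\Phi}$ family $P_t$ of \cite[Prop.\ 6.2]{Melrose-Rochon} and pushing it through the adiabatic calculus to a $\fB{\Phi}$ family $\wt P_t$; that is, the exactness of the inner hexagon itself uses $\ad$ and Theorem \ref{Thm3Sc}, and is not logically independent of the residual isomorphism in the way your ordering suggests. (Also, the five lemma does not ``transport exactness'' from one row to the other; exactness of both rows is a hypothesis, not a conclusion.)
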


In \cite{Melrose-Rochon}, the groups $\cK_{\fC{\Phi}}^q(\phi)$ are identified in terms of the KK-theory of the $C(B)$-module
\begin{equation*}
	\cC_\Phi(M) = \{ f \in \cC(M) : f \rest{\pa M} \in \Phi^*\cC(D) \}
\end{equation*}
and the $\cK_{\fC{\Phi},-\infty}^q(\phi)$ groups are identified with the K-theory of the vertical cotangent bundle $T^*D/B$.
Below, we establish the analogue of the latter directly and deduce the analogue of the former from Theorem \ref{MainThm}. 

\begin{corollary}
There are natural isomorphisms
\begin{equation*}
	\cK_{\fB{\Phi}}^q(\phi) \cong KK_B^q( \cC_\Phi(M), \cC(B) ), \Mand
	\cK_{\fB{\Phi}, -\infty}^q(\phi) \cong K_c^q(T^*D/B).
\end{equation*}
\end{corollary}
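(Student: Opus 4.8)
Both isomorphisms can be obtained by combining Theorem \ref{MainThm} with \cite{Melrose-Rochon}, but the second one I would also prove directly. For the first, \cite{Melrose-Rochon} provides natural identifications $\cK_{\fC{\Phi}}^q(\phi)\cong KK_B^q(\cC_\Phi(M),\cC(B))$, while Theorem \ref{MainThm} supplies natural isomorphisms $\cK_{\fC{\Phi}}^q(\phi)\xrightarrow{\ad}\cK_{\fB{\Phi}}^q(\phi)$; composing the two yields $\cK_{\fB{\Phi}}^q(\phi)\cong KK_B^q(\cC_\Phi(M),\cC(B))$. The identical argument, using the restricted isomorphisms of Theorem \ref{MainThm} together with the identification $\cK_{\fC{\Phi},-\infty}^q(\phi)\cong K_c^q(T^*D/B)$ of \cite{Melrose-Rochon}, already gives the second assertion; nonetheless the plan is to prove $\cK_{\fB{\Phi},-\infty}^q(\phi)\cong K_c^q(T^*D/B)$ directly, both to exhibit explicit generators and to keep the statement independent of the $\fC{\Phi}$-side computation.

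For the direct proof I would work with $q=0$ and obtain $q=1$ by suspension. A class in $\cK_{\fB{\Phi},-\infty}(\phi)$ is represented by a Fredholm perturbation $\mathrm{Id}+A$ with $A\in\Psi^{-\infty}_{\fB{\Phi}}(M/B;E)$, and by the composition formula such a perturbation is Fredholm precisely when its edge normal operator $N(\mathrm{Id}+A)$ is invertible. The normal operator is a family over $D$ of operators on $(\bbR^+\ltimes\bbR^{h})\times Z$ equivariant for the solvable group $\bbR^+\ltimes\bbR^{h}$. Taking the Fourier transform in the $\bbR^{h}$ (edge) variables identifies the dual variable with a covector in the fibre of $T^*D/B$, and the residual dilation symmetry lets one rescale onto the cosphere bundle $S^*D/B$; what is left is a family, parametrised by $S^*D/B$, of model operators on $\bbR^+\times Z$ of Bessel type with coefficients in $\Psi^\bullet(Z)$. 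The point — which I would establish by transporting the zero-calculus analysis of \cite{Albin-Melrose} into the category of operators with coefficients in $\Psi^\bullet(Z)$ — is that the group of invertible perturbations of the identity inside this half-line model algebra is weakly contractible, uniformly in the parameter. Granting this, the invertible reduced normal operator of $\mathrm{Id}+A$ determines a class in the $K$-theory of the fibrewise radial compactification of $T^*D/B$ relative to its boundary cosphere bundle, that is an element of $K_c^0(T^*D/B)$, and this assignment descends to $\cK_{\fB{\Phi},-\infty}(\phi)$.

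That the resulting map is an isomorphism I would prove in the usual two steps. Surjectivity: a compactly supported class on $T^*D/B$ is represented by a family of idempotents, which is carried back through the Fourier reduction into the edge normal operator algebra and then lifted, by surjectivity of the normal operator map on the Fredholm locus, to a genuine Fredholm perturbation of the identity. Injectivity: if the reduced normal class of $\mathrm{Id}+A$ vanishes then, after a homotopy through Fredholm elements, $N(\mathrm{Id}+A)$ becomes invertible over all of $D$, so $\mathrm{Id}+A$ may be corrected by a further element of $\Psi^{-\infty}_{\fB{\Phi}}(M/B;E)$ to an everywhere invertible perturbation of the identity, which lies in the component of the identity within the Fredholm perturbations; hence the class is zero. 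The argument here parallels the zero-calculus case of \cite{Albin-Melrose}. Finally one checks that this isomorphism agrees with the one produced by Theorem \ref{MainThm} and \cite{Melrose-Rochon}, which holds because $\ad$ intertwines the two normal operator reductions in the adiabatic limit.

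The main obstacle is the weak contractibility of the edge smoothing model algebra on $\bbR^+\times Z$, uniformly in the $D$-parameter. This is exactly where the \emph{asymptotically normally non-commutative} character of the $\fB{\Phi}$ calculus enters: for the $\fC{\Phi}$ calculus the Fourier transform of the normal operator is an ordinary family of pseudodifferential operators on a closed manifold and is disposed of by its smooth functional calculus, whereas the edge model retains the $\bbR^+$-dilation structure and must be contracted by the explicit deformation of \cite{Albin-Melrose}, now performed with coefficients in $\Psi^\bullet(Z)$. Everything else — the Fourier/rescaling reduction, surjectivity of the normal operator map, and the passage from $q=0$ to $q=1$ — is routine.
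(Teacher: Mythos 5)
Your treatment of the first isomorphism is exactly the paper's: compose the adiabatic isomorphism of Theorem \ref{MainThm} with the identification $\cK_{\fC{\Phi}}^q(\phi)\cong KK_B^q(\cC_\Phi(M),\cC(B))$ from \cite{Melrose-Rochon}. For the second isomorphism, your first route (Theorem \ref{MainThm} combined with $\cK_{\fC{\Phi},-\infty}^q(\phi)\cong K_c^q(T^*D/B)$) is logically valid but redundant: the proof of the restricted isomorphism in Theorem \ref{MainThm} already passes through the direct identification of $\cK_{\fB{\Phi},-\infty}^q(\phi)$, which is precisely why the paper establishes this one directly rather than deducing it.

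Where you genuinely diverge is in the proposed direct proof, and there the paper's route is both different and much lighter. The paper does not carry $\Psi^{-\infty}(Z)$-coefficients through a reduced-normal-operator analysis. Instead it uses the excision Lemma \ref{ExcisionLemma}: the boundary family of a Fredholm perturbation $\Id+A$, $A\in\Psi^{-\infty}_{\fB{\Phi}}(M/B;E)$, acts on a finite-rank subbundle $W$ of $\CI(\pa M/D)$, so the fibre $Z$ can be traded for $W$ and the whole problem transplanted to the zero (and scattering) calculus on $D\times[0,1]\to B$, where the identification $\cK_{0,-\infty}^*\cong K_c^*(T^*\pa M/B)$ of \cite{Albin-Melrose} applies verbatim (Theorem \ref{Thm3Sc}). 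Your plan amounts to redoing the analysis of \cite{Albin-Melrose} with operator-valued coefficients, which is exactly the work the excision step is designed to avoid.

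There is also a genuine gap in the key step of your direct argument as stated. If the group of invertible perturbations of the identity in the half-line model algebra were weakly contractible uniformly over $S^*D/B$, then every invertible reduced normal operator would be homotopic to the identity through invertibles and the class you extract would always vanish; you would have shown $\cK_{\fB{\Phi},-\infty}(\phi)=0$, not $\cK_{\fB{\Phi},-\infty}(\phi)\cong K_c^0(T^*D/B)$. What is true, and what is used in the zero-calculus case, is contractibility of a proper subgroup together with the constraint recorded in the Remark after Proposition \ref{CharacRes} that the b-normal family depends only on the point of $\pa X$ and not on the cosphere variable; the K-class is then carried by the full Fourier-transformed normal operator as an element of $K_c^{-1}(T^*\pa M\times\RR)\cong K_c^0(T^*\pa M)$ as in \eqref{InftyKernel}, not by a family over $S^*D/B$ modulo a contractible group. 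You would need to formulate and prove the correct relative contractibility statement, with $\Psi^{-\infty}(Z)$ coefficients, before your route closes.
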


From the six-term exact sequence and this corollary the topological obstruction to realizing an elliptic symbol via a family of Fredholm $\fB{\Phi}$ operators can be deduced.

\begin{corollary}
An elliptic symbol $\sigma \in \CI(S^*M/B; \hom(\pi^*E, \pi^*F) )$ can be quantized as a Fredholm family of $\fB{\Phi}$ operators if and only if its K-theory class satisfies
\begin{equation*}
	[\sigma] \in \ker \lrpar{ K_c(T^*M/B) \to K_c^1(T^*D/B) }.
\end{equation*}
\end{corollary}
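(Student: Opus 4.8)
The plan is to read the obstruction directly off the inner six--term exact sequence of Theorem \ref{MainThm}. By definition an elliptic symbol $\sigma$ admits a Fredholm quantization in the $\fB{\Phi}$ calculus precisely when its class $[\sigma] \in K^0_c(T^*M/B)$ lies in the range of the principal symbol map $\cK^0_{\fB{\Phi}}(\phi) \to K^0_c(T^*M/B)$ occurring in that hexagon. So the first step is to invoke exactness: this range equals the kernel of the connecting homomorphism
\begin{equation*}
	\delta \colon K^0_c(T^*M/B) \longrightarrow \cK^1_{\fB{\Phi},-\infty}(\phi).
\end{equation*}

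The second step is to rewrite the target using the preceding corollary, $\cK^1_{\fB{\Phi},-\infty}(\phi) \cong K^1_c(T^*D/B)$, so that $\delta$ becomes a homomorphism $K_c(T^*M/B) \to K^1_c(T^*D/B)$; it then remains to identify this homomorphism with the topological map of \S\ref{sec:SixTerm} built from the boundary fibration $\Phi$ and the families index. For this I would transport the computation to the $\fC{\Phi}$ calculus: the commutative diagram in Theorem \ref{MainThm} shows that the adiabatic isomorphisms intertwine the inner and outer hexagons and, being compatible with the principal symbol maps and with the direct identifications of the $-\infty$ groups with $K_c(T^*D/B)$, they carry $\delta$ to the corresponding $\fC{\Phi}$ connecting map $\delta_{\fC{\Phi}}$. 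By \cite{Melrose-Rochon} this latter map is exactly the topological one, so $\ker \delta = \ker \delta_{\fC{\Phi}} = \ker\bigl( K_c(T^*M/B) \to K^1_c(T^*D/B)\bigr)$, which is the assertion.

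The step I expect to require the real work is the last one: verifying that the abstract boundary map of the smooth K--theory six--term sequence coincides, after the identification $\cK^1_{\fB{\Phi},-\infty}(\phi) \cong K^1_c(T^*D/B)$, with the geometric families--index map. Concretely one must check that the adiabatic maps respect the symbol sequences and the canonical identifications of the residual groups on the nose --- that the square relating $\delta$, $\delta_{\fC{\Phi}}$ and the two copies of $K^1_c(T^*D/B)$ genuinely commutes --- which is where the structure of the adiabatic calculus developed in the earlier sections enters. Granting that compatibility, the corollary follows formally from exactness together with the known $\fC{\Phi}$ result of \cite{Melrose-Rochon}.
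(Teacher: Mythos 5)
Your proposal is correct and follows essentially the same route as the paper: the corollary is read off from exactness of the six-term sequence at $K^0_c(T^*M/B)$ together with the identification $\cK^1_{\fB{\Phi},-\infty}(\phi)\cong K^1_c(T^*D/B)$, and the identification of the connecting map with the topological (families-index) map is exactly what the paper establishes in \S\ref{sec:SixTerm} by lifting the $\fC{\Phi}$ construction of \cite{Melrose-Rochon} through the adiabatic calculus. The "real work" you flag at the end is precisely the content of the proposition proving exactness of \eqref{SixTermPhiB}, so nothing is missing.
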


Finally, in \cite{Melrose-Rochon} the second author and Fr\'ed\'eric Rochon defined a topological index map $\cK_{\fC{\Phi}}(\phi) \to K(B)$ and showed that it coincided with the analytic index.
Thus the group $\cK_{\fB{\Phi}}(\phi)$ inherits a topological index map from its isomorphism with $\cK_{\fC{\Phi}}(\phi)$ and, since the adiabatic limit commutes with the analytic index, the following K-theoretic index theorem follows.

\begin{corollary}
The analytic and topological indices coincide as maps 
\begin{equation*}
	\cK_{\fB{\Phi}} (\phi) \to K(B).
\end{equation*}
\end{corollary}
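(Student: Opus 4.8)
The plan is to \emph{define} the topological index on $\cK_{\fB{\Phi}}(\phi)$ by transport of structure and then check compatibility with the analytic index. Since Theorem~\ref{MainThm} provides an isomorphism $\ad\colon \cK_{\fC{\Phi}}(\phi)\to\cK_{\fB{\Phi}}(\phi)$, we set $\mathrm{ind}_t^{\fB{\Phi}}=\mathrm{ind}_t^{\fC{\Phi}}\circ\ad^{-1}$, where $\mathrm{ind}_t^{\fC{\Phi}}\colon\cK_{\fC{\Phi}}(\phi)\to K(B)$ is the topological index of \cite{Melrose-Rochon}. With this definition the corollary reduces to the single identity $\mathrm{ind}_a^{\fB{\Phi}}\circ\ad=\mathrm{ind}_a^{\fC{\Phi}}$ on $\cK_{\fC{\Phi}}(\phi)$: granting it, and using the index theorem of \cite{Melrose-Rochon} on the $\fC{\Phi}$ side in the form $\mathrm{ind}_t^{\fC{\Phi}}=\mathrm{ind}_a^{\fC{\Phi}}$, one gets $\mathrm{ind}_t^{\fB{\Phi}}=\mathrm{ind}_t^{\fC{\Phi}}\circ\ad^{-1}=\mathrm{ind}_a^{\fC{\Phi}}\circ\ad^{-1}=\mathrm{ind}_a^{\fB{\Phi}}$.

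To prove the identity $\mathrm{ind}_a^{\fB{\Phi}}\circ\ad=\mathrm{ind}_a^{\fC{\Phi}}$, I would use that $\ad$ is realized at the level of representatives through the adiabatic calculus interpolating between the two calculi. A class in $\cK_{\fC{\Phi}}(\phi)$ is represented by a Fredholm family $A$ of $\fC{\Phi}$ operators over $B$, and $\ad[A]$ is represented by the restriction to the $\fB{\Phi}$ end of a fully elliptic family $\mathbb{A}$ in the adiabatic calculus restricting to $A$ at the $\fC{\Phi}$ end. The adiabatic parameter $\epsilon\in[0,1]$ turns this into a family over $B\times[0,1]_\epsilon$. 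The point is that a family which is elliptic together with its adiabatic normal operators is Fredholm on the natural weighted $L^2$ spaces \emph{uniformly} in $\epsilon$: one inverts it, using the composition formula and parametrix construction of the adiabatic calculus, modulo a family of operators that is compact uniformly down to $\epsilon=0$. After the standard finite-rank stabilization and perturbation this yields an index bundle over $B\times[0,1]$ whose restrictions to the two ends are $\mathrm{ind}_a^{\fC{\Phi}}[A]$ and $\mathrm{ind}_a^{\fB{\Phi}}(\ad[A])$; as restrictions of one bundle to homotopic slices they coincide in $K(B)$.

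The main obstacle is precisely the uniformity of the Fredholm property as $\epsilon\to0$, since the normal structure of the calculus degenerates there — the $\fC{\Phi}$ model on $\bbR^{h+1}\times Z$ collapsing onto the $\fB{\Phi}$ model on $(\bbR^+\ltimes\bbR^h)\times Z$ — so the existence of a uniform parametrix, and hence the constancy of the index along the deformation, is exactly the analytic content that the adiabatic calculus is built to supply. Concretely one needs the mapping properties of the adiabatic calculus (uniform boundedness and uniform compactness of the residual ideal on weighted $L^2$) together with the fact, already invoked in constructing $\ad$ in the proof of Theorem~\ref{MainThm}, that the relations defining the smooth K-theory groups — bundle stabilization, bundle isomorphism, and smooth homotopy — are respected by restriction from the adiabatic calculus to either end, so that $\mathrm{ind}_a^{\fB{\Phi}}\circ\ad$ and $\mathrm{ind}_a^{\fC{\Phi}}$ are well defined on the same equivalence classes. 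With these ingredients in place the identity follows, and hence the coincidence of the analytic and topological indices on $\cK_{\fB{\Phi}}(\phi)$.
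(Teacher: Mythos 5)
Your proposal is correct and follows essentially the same route as the paper: the topological index on $\cK_{\fB{\Phi}}(\phi)$ is defined by transport of structure through the isomorphism $\ad$ of Theorem~\ref{MainThm}, and the statement reduces to $\mathrm{ind}_a\circ\ad=\mathrm{ind}_a$, which is exactly the content of Proposition~\ref{SameIndex} (uniform parametrix in the adiabatic calculus with residues compact down to $\eps=0$, followed by finite-rank stabilization to get a single index bundle over $[0,1]\times B$) as recorded in the commutative diagram of Theorem~\ref{AdMap}.
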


In Section \ref{Background}, we review the  $\fB{\Phi}$ and $\fC{\Phi}$ algebras of pseudodifferential operators and the definition of the smooth K-theory groups.
In Section \ref{AdCalc}, we set up the adiabatic calculus and prove that it is closed under composition in an appendix.
In Section \ref{sec:FredRes} we use an excision lemma and the analysis of the smooth K-theory of the zero calculus from \cite{Albin-Melrose} to identify the groups $\cK_{\fB{\Phi},-\infty}^q(\phi)$.
Finally, in Section \ref{sec:SixTerm}, we establish the six term exact sequence and finish the proof of Theorem \ref{MainThm}.

\paperbody
\section{$\fB{\Phi}$ and $\fC{\Phi}$ algebras of pseudodifferential operators} \label{Background}

We recall some of the main features of these calculi and refer the reader to, e.g., \cite{APSBook}, \cite{Mazzeo:Edge}, and \cite{Mazzeo-Melrose1} for more details.
For the moment we restrict attention to the case of a single operator (i.e., $B = \{\pt\}$).

We start by describing the vector fields that generate the differential operators in the two calculi.
Let $\{x, y_1, \ldots, y_h, z_1, \ldots, z_v \}$ be local coordinates near the boundary with $y_i$ lifted from the base under the fibration \eqref{BdyFibration} and the $z_i$ vertical. Here $x$ is a boundary defining function, i.e., a non-negative function on $\bar{X}$ with $\{x=0\} = \pa X$ and $dx \neq 0$ on the boundary. 
The fibered cusp structure depends mildly on this choice.

The Lie algebra, $\curly{V}_{\fB{\Phi}}$,  of vector fields tangent to the fibers of the fibration over the boundary is locally spanned by the vector fields
\begin{equation*}
	\{ x\pa_x, x\pa_{y_1},\ldots, x\pa_{y_h}, \pa_{z_1}, \ldots \pa_{z_v} \}.
\end{equation*}
Fibered boundary differential operators are polynomials in these vector fields. That is, any $P \in \mathrm{Diff}^k_{\fB{\Phi}}\lrpar{X}$ can be written locally as
\begin{equation*}
	P = \sum_{j+|\alpha|+|\beta| \leq k} 
		a_{j,\alpha, \beta}\lrpar{x,y,z} \lrpar{x\pa_x}^j\lrpar{x\pa_y}^\alpha\lrpar{\pa_z}^\beta.
\end{equation*}
There is a vector bundle ${}^{\fB{\Phi}}TX$, the $\fB{\Phi}$ tangent bundle, whose space of smooth sections is precisely $\curly{V}_{\fB{\Phi}}$. It plays the role of the usual tangent bundle in the study of the $\fB{\Phi}$ calculus. For instance, $\fB{\Phi}$ one-forms are elements of the dual bundle, ${}^{\fB{\Phi}}T^*X$, and a $\fB{\Phi}$ metric is a metric on ${}^{\fB{\Phi}}TX$, e.g. locally
\begin{equation}\label{PhibMetric}
	g_{\fB{\Phi}} = \frac{dx^2}{x^2} + \frac{\Phi^*g_Y}{x^2} + g_Z.
\end{equation}

`Extreme' cases of fibered boundary calculi are the b-calculus (where $Y$ is a point) and the $0$-calculus (where $Z$ is a point). The b-calculus models non-compact manifolds with a cylindrical end. It was used in \cite{APSBook} to prove the APS-index theorem. The $0$-calculus models non-compact manifolds that are asymptotically hyperbolic. It has applications to conformal geometry through the Fefferman-Graham construction and to physics (e.g. holography) through the AdS/CFT correspondence \cite{Graham}.

There is also a $\fC{\Phi}$ tangent bundle, whose space of sections $\curly{V}_{\fC{\Phi}}$ is locally spanned by
\begin{equation*}
	\{ x^2\pa_x, x\pa_{y_1},\ldots, x\pa_{y_h}, \pa_{z_1}, \ldots \pa_{z_v} \}.
\end{equation*}
Thus a fibered cusp differential operator
$P \in \mathrm{Diff}^k_{\fC{\Phi}}\lrpar{X}$ can be written locally as
\begin{equation*}
	P = \sum_{j+|\alpha|+|\beta| \leq m} 
		a_{j,\alpha, \beta}\lrpar{x,y,z} \lrpar{x^2\pa_x}^j\lrpar{x\pa_y}^\alpha\lrpar{\pa_z}^\beta.
\end{equation*}

The `extreme' cases of fibered cusp calculi are known as the cusp-calculus (where $Y$ is a point) and the scattering-calculus (where $Z$ is a point). 
The cusp-calculus models the same geometric situation (asymptotically cylindrical manifolds) as the b-calculus, and indeed these calculi are very closely related (see \cite[\S 3]{Albin-Melrose}).
The scattering-calculus models non-compact, asymptotically locally Euclidean manifolds. Indeed, if one compactifies $\RR^n$ radially to a half-sphere, the metric near the boundary takes the form
\begin{equation*}
	\frac{dx^2}{x^4} + \frac{h_x}{x^2},
\end{equation*}
and so defines a metric on the scattering tangent bundle.

As an illustration of the difference between the $\fB{\Phi}$ and $\fC{\Phi}$ calculi we point out that the scattering Lie algebra is asymptotically commutative in the sense that
\begin{equation*}
	\lrspar{\curly{V}_{sc}, \curly{V}_{sc} } \subset x\curly{V}_{sc},
\end{equation*}
while the sub-bundle of the zero tangent bundle ${}^0 TX$ spanned by commutators of zero vector fields is non-trivial over the boundary.
This asymptotic commutativity of horizontal vector fields in the $\fC{\Phi}$ calculus lies behind the simplifications over the $\fB{\Phi}$ calculus.

As on a closed manifold, certain interesting operations (e.g. powers, parametrices, or inverses) require passing to a larger calculus of pseudodifferential operators. Pseudodifferential operators mapping sections of a bundle $E$ to sections of a bundle $F$ are denoted $\Psi^*_{\fB{\Phi}}\lrpar{X;E,F}$ and $\Psi^*_{\fC{\Phi}}\lrpar{X;E,F}$ respectively. Operators in these calculi act by means of distributional integral kernels as in the Schwartz kernel theorem, i.e.
\begin{equation*}
	Pf\lrpar{\zeta} = \int_X \cK_{P}\lrpar{\zeta, \zeta'}f\lrpar{\zeta'}.
\end{equation*}
These integral kernels have singularities along the diagonal and when $\zeta, \zeta' \in \pa X$. The latter can be resolved by lifting the kernel to an appropriate blown-up space, denoted respectively $X^2_{\fB{\Phi}}$ and $X^2_{\fC{\Phi}}$. We will describe these spaces below, in $\S$\ref{AdCalc}, as part of the construction of the adiabatic calculus.

These pseudodifferential calculi each possess two symbol maps. On a closed manifold, the highest order terms in the local expression of a differential operator define invariantly a function on the cosphere bundle of the manifold. This same construction yields the principal symbol of a differential (and more generally pseudodifferential) $\fB{\Phi}$ operator as a function on the $\fB{\Phi}$ cosphere bundle, i.e., the bundle of unit vectors in ${}^{\fB{\Phi}}T^*X$.
This interior symbol fits into the short exact sequence,
\begin{equation*}
	\Psi^{k-1}_{\fB{\Phi}}\lrpar{X;E,F} \hookrightarrow
	\Psi^k_{\fB{\Phi}}\lrpar{X;E,F} 
	\substack{\sigma \\ \twoheadlongrightarrow \\ \phantom{\sigma}}
	C^{\infty}\lrpar{ {}^{\fB{\Phi}}S^*X; \lrpar{N^*X}^k \otimes \hom\lrpar{\pi^*E,\pi^*F}},
\end{equation*}
where $\lrpar{N^*X}^k$ denotes a line bundle whose sections have homogeneity $k$ and $\pi$ is the projection map ${}^{\fB{\Phi}}S^*X \to X$.
The interior symbol in the fibered cusp calculus works in much the same way, and the corresponding sequence 
\begin{equation*}
	\Psi^{k-1}_{\fC{\Phi}}\lrpar{X;E,F} \hookrightarrow
	\Psi^k_{\fC{\Phi}}\lrpar{X;E,F} 
	\substack{\sigma \\ \twoheadlongrightarrow \\ \phantom{\sigma}}
	C^{\infty}\lrpar{ {}^{\fC{\Phi}}S^*X; \lrpar{N^*X}^k \otimes \hom\lrpar{\pi^*E,\pi^*F}},
\end{equation*}
is also exact.
The interior symbol is used to define ellipticity: an operator is elliptic if and only if its interior symbol is invertible.

The second symbol map, known as the normal operator, models the behavior of the operator near the boundary. Following \cite{EpsteinMelroseMendoza}, at any point $p \in Y$ let $\curly{I}_{\Phi^{-1}\lrpar{p}}\subset\curly{V}_{\fB{\Phi}}$ be the subspace of $\fB{\Phi}$ vector fields that vanish along the fiber $\Phi^{-1}\lrpar{p}$ (as $\fB{\Phi}$ vector fields). These vector fields form an ideal and the quotient ${}^{\fB{\Phi}}T_pX$ is a Lie algebra. 
The projection 
\begin{equation*}
	\curly{V}_{\fB{\Phi}} \to {}^{\fB{\Phi}}T_pX
\end{equation*}
lifts to a map of enveloping algebras
\begin{equation*}
	\mathrm{Diff}^k_{\fB{\Phi}}\lrpar{X} \to \curly{D}\lrpar{{}^{\fB{\Phi}}T_pX},
\end{equation*}
and the image of $P\in\mathrm{Diff}^k_{\fB{\Phi}}\lrpar{X}$ is known as the normal operator of $P$ at $p$, $N_{\fB{\Phi},p}\lrpar{P}$.
The normal operator extends to pseudodifferential operators and can be realized either globally (its kernel is obtained by restricting the kernel of $P$ to a certain boundary face in $X^2_{\fB{\Phi}}$) or locally by means of an appropriate rescaling as follows.

Assume that we have chosen a product neighborhood of the boundary and, in a neighborhood $\curly{U}_Y$ of a point $p \in Y$, we have chosen a local trivialization of $\Phi$, so that $\curly{U} \subset X$ looks like
\begin{equation}\label{ProdDecomp}
	\curly{U} \cong \left[ 0, \eps \right) \times \curly{U}_Y \times Z  \xrightarrow{\Phi} \curly{U}_Y,
\end{equation}
with coordinates $\{x\}$, $\{y_i\}$, and $\{z_i\}$ on the respective factors, and a corresponding decomposition of the tangent bundle
\begin{equation*}
	{}^{\fB{\Phi}}TX = {}^{\fB{\Phi}}N\pa X \oplus V\pa X.
\end{equation*}
Using \eqref{ProdDecomp} we can define the dilation
\begin{equation*}
	\lrpar{x,y_i,z_j} \overset{M_\delta}{\mapsto} \lrpar{\delta x, \delta y_i, z_j}.
\end{equation*}
If $V \in \curly{V}_{\fB{\Phi}}$ then in local coordinates
\begin{equation*}
	V_\delta = \lrpar{M_\delta^{-1}}^*V
\end{equation*}
is a smooth vector field defined in a neighborhood of zero (increasing as $\delta$ decreases). The map
\begin{equation*}
	\cV_{\fB{\Phi}} \ni V \mapsto \lim_{\delta \to 0} V_\delta \in T({}^{\fB{\Phi}} N \pa Z) \oplus V\pa Z
\end{equation*}
has kernel $\curly{I}_{p,\fB{\Phi}}$ and realizes ${}^{\fB{\Phi}}T_pZ$ as a Lie algebra of smooth vector fields on $T_pZ$.
More generally, Let $L_{\curly{U}}$ be a chart at $p$ mapping into $N^+_pY$, and define the normal operator of $P\in \Psi^0_{\fB{\Phi}}\lrpar{X}$ at $p$ to be
\begin{equation*}
	N_{p,\fB{\Phi}}\lrpar{P}u = 
	\lim_{\delta \to 0} \lrpar{M_\delta}^*L_\curly{U}^*P\lrpar{L_\curly{U}^{-1}}^*\lrpar{M_{\frac1\delta}}^*u.
\end{equation*}
The Lie group structure on ${}^{\fB{\Phi}}N\pa X$ at the point $p \in Y$ is that of $\RR^+ \ltimes \RR^h$, i.e.
\begin{equation*}
	\lrpar{s,u}\cdot \lrpar{s',u'}
	= \lrpar{ss', u+su'}.
\end{equation*}
The kernel of the normal operator at the point $p \in Y$ is invariant with respect to this action, so the normal operator acts by convolution in the associated variables.
We refer to this as a non-commutative suspension, and denote these operators by $\Psi^*_{N\mathrm{\sus}}\lrpar{{}^{\fB{\Phi}}N\pa X; E, F}$. They fit into a short exact sequence
\begin{equation*}
	x\Psi^k_{\fC{\Phi}}\lrpar{X;E,F} \hookrightarrow
	\Psi^k_{\fC{\Phi}}\lrpar{X;E,F} 
	\substack{ N_{\fB{\Phi}} \\ \twoheadlonglongrightarrow \\ \phantom{ N_{\fB{\Phi}} }}
	\Psi^k_{N\mathrm{\sus}}\lrpar{{}^{\fB{\Phi}}N\pa X; E, F}.
\end{equation*}

The same construction yields a model operator at every point $p \in Y$ for the fibered cusp calculus. In this case the Lie group is commutative and isomorphic to $\RR \times \RR^h$; the resulting calculus is referred to as the suspended calculus. The corresponding short exact sequence is
\begin{equation*}
	x\Psi^k_{\fC{\Phi}}\lrpar{X;E,F} \hookrightarrow
	\Psi^k_{\fC{\Phi}}\lrpar{X;E,F} 
	\substack{ N_{\fC{\Phi}} \\ \twoheadlonglongrightarrow \\ \phantom{ N_{\fC{\Phi}} }}
	\Psi^k_{\mathrm{\sus}}\lrpar{{}^{\fC{\Phi}}N\pa X; E, F}.
\end{equation*}

The volume form of a $\fB{\Phi}$ metric (e.g. \eqref{PhibMetric}), together with Hermitian metrics on $E$ and $F$, defines a space of $L^2$ sections. An operator $P \in \Psi^k_{\fB{\Phi}}\lrpar{X;E,F}$ acts linearly on $L^2$ sections of $E$; boundedly if $k \leq 0$. There is a corresponding scale of $\fB{\Phi}$ Sobolev spaces, $H_{\fB{\Phi}}^s\lrpar{X;E}$, and 
an element $P \in \Psi^k_{\fB{\Phi}}\lrpar{X;E,F}$ defines a bounded linear operator
\begin{equation*}
	H_{\fB{\Phi}}^s\lrpar{X;E} \xrightarrow{P} H^{s-k}_{\fB{\Phi}}\lrpar{X;F}.
\end{equation*}
This operator is Fredholm if and only if both symbol maps $\sigma\lrpar{P}$ and $N_{\fB{\Phi}}\lrpar{P}$ are invertible operators, in which case $P$ is said to be {\em fully elliptic}.

In order to carry out our analysis below, we will need a good understanding of the normal operators of elements of $\Psi^{-\infty}_0$. A construction from \cite{Mazzeo:Edge} and \cite{Lauter} realizes normal operators of the zero calculus as families of `b,c-calculus' on the interval, referred to as {\em reduced normal operators}. Operators in this calculus have b-behavior at one end of the interval and cusp-behavior at the other end. The reduced normal operator of an operator in $\Psi^{-\infty}_0\lrpar{X;E}$ lies inside the space
\begin{equation*}
	\Psi_{b,c}^{-\infty,-\infty}\lrpar{\lrspar{0,1},\pi^*E}
\end{equation*}
of operators with b-behavior of order $-\infty$ near $0$ and which vanish to infinite order near $1$. In the following result from \cite{Lauter}, $\pi$ denotes the canonical projection $S^*\pa X \to \pa X$.

\begin{proposition}[\cite{Lauter}, Prop. 4.4.1] \label{CharacRes}
The reduced normal operators of elements of $\Psi^{-\infty}_0\lrpar{X;E}$ are precisely those functions 
\begin{equation}\label{RedNRes} \begin{split}
	\curly{N}\lrpar{y', \eta; \tau, \rho} 
	&\in C^{\infty}\lrpar{S^*\partial X} 
	\wh{\otimes}_{\pi} \dot{C}^{\infty}\lrpar{\lrspar{-1,1}, \Omega^{1/2}}
	\wh{\otimes}_{\pi} \Sc\lrpar{\RR_+, \half{b}}\\
	&\phantom{xxx} \otimes_{C^{\infty}\lrpar{\wt{\curly{I}}^2}}
	C^{\infty}\lrpar{\wt{\curly{I}}^2, \beta^*\lrpar{\Hom\lrpar{\pi^*E \otimes \Half{b,c}}}}
\end{split}\end{equation}
with $\curly{N}\lrpar{y', \eta; \cdot, \rho}$ the lift of a density on $T^*\partial X$, 
and which for each $\eta \in S^*\partial X$ extend to an element of 
$\Psi_{b,c}^{-\infty,-\infty}\lrpar{\lrspar{0,1},\pi^*E}$ in such a way that
\begin{equation}\label{FunSpace}
	\curly{N} \in C^{\infty}\lrpar{S^*\partial X, \Psi_{b,c}^{-\infty,-\infty}\lrpar{\lrspar{0,1},\pi^*E}}.
\end{equation}
\end{proposition}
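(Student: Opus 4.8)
The plan is to follow the construction realising zero normal operators as reduced normal operators, check that it produces exactly the function space \eqref{RedNRes}--\eqref{FunSpace}, and then reverse it. An element $A \in \Psi^{-\infty}_0\lrpar{X;E}$ has a kernel lifting to a smooth section of the appropriate half-density bundle on the zero stretched product $X^2_0$ (described in \S\ref{AdCalc}), rapidly vanishing at the two zero side faces; its normal operator is the restriction of this kernel to the front face $\mathrm{ff}\lrpar{X^2_0}$, which fibres over $\pa X$ with fibre the compactification of the Lie group $\bbR^+ \ltimes \bbR^h$ --- exactly the group entering the $\fB{\Phi}$ normal operator of \S\ref{Background} when $Z = \{pt\}$. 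Thus, fibrewise over $p \in \pa X$, $N_0\lrpar{A}$ is a convolution operator whose kernel $\kappa_p$, in the coordinates $(s,u)$ of \eqref{ProdDecomp}, is smooth in the interior of the group, carries the b-type behaviour forced by the zero structure as $s \to 0$, and is Schwartz as $\lvert u \rvert \to \infty$; the last two features are precisely what $A \in \Psi^{-\infty}_0$, rather than finite order, contributes.

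The next step is to apply the Fourier transform in the abelian normal subgroup $\bbR^h$ (the variable $u$), with dual variable $\eta$, turning the convolution in $u$ into multiplication and leaving a family of operators on the half-line $\bbR^+_s$ depending on $(p,\eta) \in T^*\pa X$. The residual $\bbR^+$-dilation symmetry of the group conjugates the operator at $\eta$ to that at $\lambda\eta$ via the scaling $s \mapsto \lambda s$; this covariance, together with the metric on $T^*\pa X$ read off from \eqref{PhibMetric}, is what organises the $(y',\eta)$-dependence over the cosphere bundle $S^*\pa X$ and underlies the condition in \eqref{RedNRes} that $\curly{N}$ restrict to the lift of a density on $T^*\pa X$, as well as the global smoothness \eqref{FunSpace}.

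The substantive step is to identify the structure of the operator on $\bbR^+_s$ obtained for fixed $\eta$. I would show that, after compactifying $s$ to an interval $\bar{\curly{I}}$ and passing to the blown-up double space $\wt{\curly{I}}^2$, this operator is smooth across the lifted diagonal (it is smoothing, since $A \in \Psi^{-\infty}_0$); a b-pseudodifferential operator of order $-\infty$ near the $s \to 0$ end, with nontrivial indicial operator there --- this nontriviality being the whole point of the reduction; and residual, i.e.\ rapidly vanishing, toward the $s \to \infty$ end, which is the cusp behaviour. This places the family in $\Psi_{b,c}^{-\infty,-\infty}\lrpar{\lrspar{0,1},\pi^*E}$, depending smoothly on $(y',\eta)$. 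The remaining work is to match this description against the precise right-hand side of \eqref{RedNRes}: tracking the images of the zero half-density bundle through the Fourier transform and the blow-downs to $\wt{\curly{I}}^2$, identifying the auxiliary variables $\tau \in \lrspar{-1,1}$ and $\rho \in \bbR_+$, and accounting for the half-density factors $\Omega^{1/2}$, $\Omega^{1/2}_b$ and $\Omega^{1/2}_{b,c}$ they carry. This half-density and blow-up bookkeeping is the one genuinely delicate part of the argument; the analytic content is comparatively soft. Together this gives the necessity direction.

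For the converse, one runs the construction backwards: given $\curly{N}$ of the stated form, the dilation covariance lets one reconstruct the $\eta$-dependence from $\lvert\eta\rvert = 1$, and inverting the Fourier transform in $\eta$ produces, over each $p$, a convolution kernel on $\mathrm{ff}\lrpar{X^2_0}$ with the smooth-interior and Schwartz-at-infinity behaviour found above --- the compatibility conditions built into \eqref{RedNRes}--\eqref{FunSpace} being exactly what makes this inverse transform land among the front-face kernels allowed by the zero calculus. One then invokes the surjectivity of the normal operator map for $\Psi^{-\infty}_0$, the analogue of the short exact sequences recalled in \S\ref{Background}, to realise the result as $N_0\lrpar{A}$ for a genuine $A \in \Psi^{-\infty}_0\lrpar{X;E}$. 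I expect the main obstacle throughout to be precisely the compatibility of the nested blow-ups $X^2_0$, $\bar{\curly{I}}^2$ and $\wt{\curly{I}}^2$ and the half-density bundles they carry, so that the b-behaviour near $0$, the cusp-vanishing near $1$, and the $S^*\pa X$-smoothness all appear with exactly the orders recorded in \eqref{RedNRes}.
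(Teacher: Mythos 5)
The paper offers no proof of this proposition at all: it is quoted verbatim from Lauter (Prop.~4.4.1 there), with the underlying construction attributed to \cite{Mazzeo:Edge} and \cite{Lauter}, so there is no internal argument to compare yours against. That said, your outline does reconstruct the standard route taken in the cited reference: restrict the kernel to the front face of $X^2_0$, where it is a family of convolution kernels on $\bbR^+\ltimes\bbR^h$; Fourier transform in the abelian factor $\bbR^h$; use the residual $\bbR^+$-covariance to reduce the $\eta$-dependence to $S^*\pa X$ (which is also the correct explanation of the density condition and of the remark following the proposition, that the indicial family depends only on $\pa X$); and identify the resulting operators on the compactified half-line as b at one end and residual/cusp at the other. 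So the strategy is right.

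What you have written is, however, a roadmap rather than a proof, and you say so yourself. The two places where all the content actually sits are exactly the ones you defer: (i) the verification that the partially Fourier-transformed kernel, lifted to $\wt{\curly{I}}^2$, lies in $\Psi^{-\infty,-\infty}_{b,c}$ with smooth dependence on $S^*\pa X$ and with precisely the half-density factors $\Omega^{1/2}$, $\half{b}$, $\Half{b,c}$ appearing in \eqref{RedNRes} --- this is not soft bookkeeping but the characterization being claimed, and the word ``precisely'' in the statement means both inclusions must be checked; and (ii) the converse, where invoking ``surjectivity of the normal operator map'' is circular as stated, since the content of the surjectivity claim at the level of reduced normal operators is exactly that every function of the form \eqref{RedNRes}--\eqref{FunSpace} arises, which is what is to be proved. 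One small correction: the b-type behaviour at the $s\to0$ end of the interval is not visible in the front-face kernel itself (which for a $\Psi^{-\infty}_0$ operator vanishes rapidly at all boundary faces of the front face); it emerges only after the fibrewise Fourier transform and compactification, so your first paragraph slightly misattributes where that structure comes from. For the purposes of this paper the proposition is an imported black box, and your sketch is a fair account of how the cited proof goes, but it should not be mistaken for a self-contained argument.
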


{\em Remark.} Notice that the fact that 
$\curly{N}\lrpar{y', \eta; \cdot, \rho}$ is the lift of a density on 
$T^*\partial X$ implies in particular that the b-normal operator is a family defined on $S^*\pa X$ but actually only depending on $\pa X$.

Next, we define K-theory groups of $\fB{\Phi}$ operators. 
Following \cite[Definition 2]{Melrose-Rochon}, define 
\begin{equation*}
A_{\fB{\Phi}}\lrpar{M;E, F} 
= \{ \lrpar{\sigma\lrpar{A}, \curly{N}\lrpar{A}}: A\in \Psi^0_{\fB{\Phi}}(M;E, F) \text{ Fredholm} \}
\end{equation*}
so that $\cK^0_{\fB{\Phi}}\lrpar{M}$ consists of equivalence classes of elements in $A_{\fB{\Phi}}\lrpar{M;E, F}$, where two elements are equivalent if there is a finite chain consisting of the following:
\begin{align}
	\lrpar{\sigma,N} \in A_{\fB{\Phi}}\lrpar{M;E, F} &\sim \lrpar{\sigma',N'} 
		\in A_{\fB{\Phi}}\lrpar{M;E', F'} \label{Equiv1}\\
	\intertext{if there exists a bundle isomorphisms $\gamma_E:E \to E'$ and $\gamma_F:F \to F'$ 
	such that $\sigma = \gamma_F^{-1} \circ \sigma' \circ \gamma_E$
	and $\cN = \gamma_F^{-1} \circ \cN' \circ \gamma_E$,}
	\lrpar{\sigma,N} \in A_{\fB{\Phi}}\lrpar{M;E, F} &\sim %
	\lrpar{\wt\sigma,\wt{N}} \in A_{\fB{\Phi}}\lrpar{M;E, F} \label{Equiv2}\\
	\intertext{if there exists a homotopy of Fredholm operators $A_t$ in 
		$\Psi_{\fB{\Phi}}^0\lrpar{M;E, F}$, with
	$\lrpar{\sigma,N} = \lrpar{\sigma\lrpar{A_0},\curly{N}\lrpar{A_0}}$ and
	$\lrpar{\wt{\sigma},\wt{N}}=\lrpar{\sigma\lrpar{A_1},\curly{N}\lrpar{A_1}}$, and}
	\lrpar{\sigma,N} \in A_{\fB{\Phi}}\lrpar{M;E, F} &\sim %
	\lrpar{\sigma\oplus \Id_G, N \oplus \Id_G} \in A_{\fB{\Phi}}\lrpar{M;E\oplus G, F \oplus G}. \label{Equiv3}
\end{align}
Similarly,
we define $\cK^1_{\fB{\Phi}}\lrpar{M}$ as equivalence classes of elements in the space of based loops
\begin{equation*}
	\Omega A_{\fB{\Phi}}\lrpar{M;E, F}
	= \{ s \in C^{\infty}\lrpar{\mathbb{S}^1,A_{\fB{\Phi}}\lrpar{M;E, F}} :  s\lrpar{1} = \Id \},
\end{equation*}
where the equivalences are finite chains of \eqref{Equiv1}, \eqref{Equiv2}, \eqref{Equiv3} with bundle transformations and homotopies required to be the identity at $1\in\mathbb{S}^1$.

In the same way, we can describe $\cK_{\fB{\Phi},-\infty}^0\lrpar{M}$ and $\cK_{\fB{\Phi},-\infty}^1\lrpar{M}$ as equivalence classes of elements in 
\begin{equation*}
	A_{\fB{\Phi},-\infty}\lrpar{M;E, F} = 
	\{ N\lrpar{\Id + A} : A \in \Psi_{\fB{\Phi}}^{-\infty}\lrpar{M;E, F}, 
		\Id + A \phantom{x}\mathrm{Fredholm} \},
\end{equation*}
and
\begin{equation*}
	\Omega A_{\fB{\Phi},-\infty}\lrpar{M;E, F}
	= \{ s \in C^{\infty}\lrpar{\mathbb{S}^1,A_{\fB{\Phi},-\infty}\lrpar{M;E, F}} :  s\lrpar{1} = \Id \}
\end{equation*}
respectively.

\section{Adiabatic limit of edge to $\phi$ calculi} \label{AdCalc}

For a compact manifold with boundary, with a specified fibration of the boundary, we show that there is an adiabatic limit construction passing from the fibered boundary (for $\eps >0$) calculus to the fibered cusp calculus in the limit. Every Fredholm (i.e., totally elliptic) operator in the limiting calculi occurs in a totally elliptic family in the adiabatic calculus, so with constant index. This allows the K-theory of the two algebras to be identified and the (families) index map for one to be reduced to that of the other.

Let $X$ be a compact manifold with boundary, with boundary fibration \newline
$\xymatrix @C=1pc { Z \ar@{-}[r] & \pa X \ar[r] & Y}$. 
We construct a resolution of $X^2 \times \lrspar{0,\eps_0}$ which carries the adiabatic calculus. First blow up the corner and consider
\begin{equation*}
	X^2_{\lrpar{1}} = \lrspar{ X^2 \times \lrspar{0,\eps_0}; \lrpar{\pa X}^2 \times \{ 0\}}
	\xrightarrow{\beta_{\lrpar{1}}} X^2 \times \lrspar{0, \eps_0}.
\end{equation*}
Then blow up the two sides to get
\begin{equation*}
	X^2_{\lrpar{2}} = \lrspar{ X^2_{\lrpar{1}}; X \times \pa X \times \{ 0 \}; \pa X \times X \times \{0\}}
	\xrightarrow{\beta_{\lrpar{2}}} X^2_{\lrpar{1}}.
\end{equation*}
The lifts of these manifolds are disjoint in $X^2_{\lrpar{1}}$ and commutativity of blow-ups shows that there is a smooth map, which is in fact a b-fibration, to the rescaled single space 
\begin{equation*}
	X_\eps = \lrspar{ X \times \lrspar{0,\eps_0}; \pa X \times \{0\}}
\end{equation*}
in either factor:
\begin{equation*}
	X^2_{\lrpar{2}} \underset{\pi_{R,\eps}}{\overset{\pi_{L,\eps}}{\rightrightarrows}} 
	X_\eps.
\end{equation*}
This leads to the action of the adiabatic operators on $\CI\lrpar{X_{\eps}}$.

The choice of the fibered-cusp structure on $X$ corresponds to the singling out of a class of boundary defining functions which all induce the same trivialization of the normal bundle to the boundary along the fibres of $\Phi$.
Let $x$ be such a boundary defining function, on the left factor of $X$ in $X^2$ and let $x'$ denote the same function on the right factor. The hypersurface $x=x'$ is smooth near $\lrpar{ \pa X}^2 \subset X^2$ and lifts to be smooth near the front face of $X^2_{\lrpar{2}}$, $\ff\lrpar{X^2_{\lrpar{2}}}$, i.e. the lift of $\lrpar{\pa X}^2 \times \{0\}$. In fact it meets the front face in a smooth hypersurface; we denote by $\df{F}$ its intersection with the fibre diagonal
\begin{equation*}
	\df{F} \subset \ff\lrpar{X^2_{\lrpar{2}}}.
\end{equation*}
In fact it is already smooth in $X^2_{\lrpar{1}}$ and does not meet the side faces blown up to produce $X^2_{\lrpar{2}}$.

Note that $\df{F}$ is not a $p$-submanifold because of its intersection with the lift of $\lrpar{\pa X}^2 \times \lrspar{0,\eps_0}$ where it meets the boundary of the fibre diagonal
\begin{equation*}
	\df{B} \subset X^2_{\lrpar{2}}
\end{equation*}
really of course the lift of the fibre diagonal inside $\lrpar{\pa X}^2 \times \lrspar{0,\eps_0}$ to $X^2_{\lrpar{2}}$. 
The blow up of $\df{B}$ resolves $\df{F}$ to a $p$-submanifold, so we can set
\begin{equation*}
	X^2_{\eps \Phi} = \lrspar{X^2_{\lrpar{2}};\df{B};\df{F}}.
\end{equation*}
We will also use $\df B$ and $\df F$ to denote the boundary hypersurfaces of $X^2_{\eps\Phi}$ that result from blowing up these submanifolds.

\begin{figure}[h]
	\begin{minipage}{2in}
		\center{\includegraphics[scale=.95]{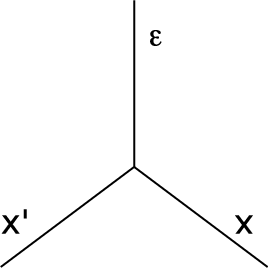}}{\caption{$X^2 \times [0,\eps_0]$}}
	\end{minipage}
	\begin{minipage}{2in}
		\center{\includegraphics[scale=.66]{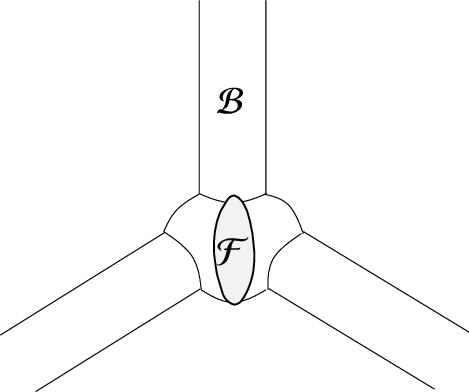}}{\caption{$X^2_{\eps \Phi}$}}
	\end{minipage}
\end{figure}

\begin{proposition}\label{DoubleAdSpace}
The diagonal, $\diag_{\eps\Phi} = \diag \times [0,\eps_0] \subset X^2 \times [0,\eps_0]$, lifts to an interior $p$-submanifold of $X^2_{\eps\Phi}$ and the projections lift to b-fibrations
 \begin{equation*}
	X^2_{\eps\Phi} \underset{\pi_{R}}{\overset{\pi_{L}}{\rightrightarrows}} 
	X_\eps
\end{equation*}
which are transverse to the lifted diagonal.
\end{proposition}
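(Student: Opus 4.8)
The plan is to verify each of the three assertions—(i) $\diag_{\eps\Phi}$ lifts to an interior p-submanifold, (ii) $\pi_L,\pi_R$ lift to b-fibrations, and (iii) the lifted projections are transverse to the lifted diagonal—by tracking the diagonal and the projections through the iterated blow-up construction $X^2\times[0,\eps_0] \to X^2_{(1)} \to X^2_{(2)} \to X^2_{\eps\Phi}$ one stage at a time, using the standard facts that a b-fibration composed with (or pulled back along) a blow-up of a p-submanifold is again a b-fibration, and that transversality is preserved under blow-up of centres that themselves meet the submanifold cleanly. First I would note that $\diag\times[0,\eps_0]$ already meets each of the four centres we blow up—$(\pa X)^2\times\{0\}$, the two side faces $X\times\pa X\times\{0\}$ and $\pa X\times X\times\{0\}$, and then $\df B$ and $\df F$—in a p-submanifold of the centre: the intersection with $(\pa X)^2\times\{0\}$ is $\diag_{\pa X}\times\{0\}$, the intersection with each side face is again $\diag_{\pa X}\times\{0\}$ (forcing $x=x'=0$), the intersection with $\df B$ is the fibre diagonal of $(\pa X)^2$ over $Y$ at $\eps=0$, and the intersection with $\df F$ is that same fibre diagonal (since on the diagonal $x=x'$ automatically, $\df F$ contains the lifted diagonal near the front face). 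By the lifting lemma for blow-ups this guarantees that at each stage the lift of the diagonal is again a p-submanifold, interior because at no stage does the diagonal get absorbed into a boundary face, and after the final blow-up it is the claimed $\diag_{\eps\Phi}$.

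Next I would establish (ii). The two projections $\pi_{L,\eps},\pi_{R,\eps}:X^2_{(2)}\rightrightarrows X_\eps$ are already asserted in the excerpt (via commutativity of blow-ups) to be b-fibrations onto the rescaled single space $X_\eps=[X\times[0,\eps_0];\pa X\times\{0\}]$. It then suffices to check that blowing up $\df B$ and then $\df F$ in $X^2_{(2)}$ does not destroy the b-fibration property, i.e. that $\df B$ and $\df F$ map into the boundary hypersurface structure of $X_\eps$ compatibly: concretely, one checks that $\pi_{L,\eps}$ (resp. $\pi_{R,\eps}$) maps each of $\df B$ and $\df F$ either into a single boundary hypersurface of $X_\eps$ or submersively onto $X_\eps$ with the fibre meeting the boundary cleanly, so that the lifted map $\pi_L=\beta^*\pi_{L,\eps}$ is still a b-map with surjective b-differential and no boundary hypersurface mapped to a corner. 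This is where I expect the bulk of the bookkeeping: one must write out projective coordinates near $\ff(X^2_{\eps\Phi})$ adapted to the fibration $\Phi$—coordinates like $x$, $\eps/x$, $x'/x$, $(y-y')/x$ on the $\df B$ and $\df F$ faces—and read off that each factor $x\partial_x$-type vector field pushes forward correctly.

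Finally (iii): transversality of $\pi_L$ (and $\pi_R$) to $\diag_{\eps\Phi}$. On the open dense interior this is immediate since away from all boundaries $\pi_L$ restricted to the diagonal is a diffeomorphism. The content is at the boundary faces $\df B$ and $\df F$, and there I would argue in the same projective coordinates used for (ii): on $\diag_{\eps\Phi}$ one has $x=x'$, $y=y'$, $z=z'$, so the coordinates $x'/x=1$, $(y-y')/x=0$ cut it out, and one checks that $\pi_L$ is submersive in the complementary directions $(x,\eps/x,y,z,\eta)$ so that $d\pi_L$ together with $T\diag_{\eps\Phi}$ spans $TX_\eps$ at every boundary point—this is exactly the statement that $\df B$ and $\df F$ were introduced precisely so that the fibre diagonal becomes a p-submanifold transverse to the lifted fibres. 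The main obstacle, as noted, is purely the coordinate computation near the front face where $\df B$ and $\df F$ come together: one has to exhibit a single projective coordinate system valid there, verify the b-fibration conditions (in particular that no boundary hypersurface is mapped into a codimension-two corner, which relies on the order in which $\df B$ and $\df F$ were blown up), and simultaneously read off transversality; everything else is a formal consequence of the blow-up lifting lemmas.
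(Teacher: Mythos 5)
Your plan follows essentially the same route as the paper's proof, which simply writes down projective coordinates $s=x/x'$, $u_i=(y_i-y_i')/x'$ near $\df B$ and $S=(s-1)/x'$ near $\df F$, reads off that the lifted diagonal is $\{s=1,u=0,z=z'\}$ (resp.\ $\{S=0,u=0,z=z'\}$) and hence an interior p-submanifold, and then notes that the projections send no boundary hypersurface to a corner, are fibrations on the interiors of the boundary faces, and are transverse to the diagonal by the same coordinate description. One minor correction that does not affect the argument: the lifted diagonal meets $\df B$ in (the lift of) $\diag_{\pa X}\times[0,\eps_0]$, a p-submanifold of the centre, rather than in the full fibre diagonal at $\eps=0$.
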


\begin{proof}
To show that the diagonal lifts to an interior $p$-submanifold, it is only necessary to consider it near the new boundary faces.
Let $\{ x, y_1, \ldots, y_h, z_1, \ldots, z_k \}$ be local coordinates near the boundary on the left factor of $X^2$ as in \S\ref{Background}, and $\{ x',y_1', \ldots, y_h', z_1', \ldots, z_k' \}$ a corresponding set of coordinates on the right factor of $X^2$.
Then local coordinates on $X^2_{\eps\Phi}$ near $\df B \cap \diag_{\eps\Phi}$ are given by
\begin{equation*}
	s = \frac x{x'}, x', u_i = \frac{y_i - y_i'}{x'}, y_i',  z_j, z_j' .
\end{equation*}
In these coordinates, $\diag_{\eps\Phi} = \{ s=1, u_i = 0, z_j = z_j' \}$, hence $\diag_{\eps\Phi}$ is a $p$-submanifold near $\df B$.
In the corresponding local coordinates near $\df F \cap \diag_{\eps\Phi}$,
\begin{equation*}
	S = \frac{s-1}{x'}, x', u_i, y_i' , z_j, z_j',
\end{equation*}
the diagonal is given by $\diag_{\eps\Phi} = \{ S = 0, u_i = 0, z_j = z_j' \}$ and is again a $p$-submanifold.
Next, to see that the maps $\pi_R$ and $\pi_L$ lift to b-fibrations, note that they do not map any boundary hypersurface to a corner and are fibrations in the interior of each boundary face.
Finally, they are transverse to the lifted diagonal from the local coordinate description of $\diag_{\eps\Phi}$.
\end{proof}

For any $\bbZ_2$-graded vector bundle $\bbE = (E_+, E_-)$ over $X$ the space
$\Psi^k_{\eps\Phi}\lrpar{X;\bbE}$ 
of adiabatic-$\Phi$ pseudodifferential operators on $X$, between sections of $E_+$ and sections of $E_-$, is defined to be 
\begin{multline*}
	\Psi^k_{\eps\Phi}\lrpar{X;E_+, E_-}
	= \bigl\{ A \in I^{k-\frac14}
	\lrpar{X^2_{\eps\Phi}, \mathrm{Diag} \times \lrspar{0,\eps_0};%
		\Hom\lrpar{E_+,E_-}\otimes\Omega_{\eps\Phi}}: \\
	A \equiv 0  
	\text{ at the lifts of } 
	\pa X \times X \times \lrspar{0,\eps_0},
	X \times \pa X \times \lrspar{0,\eps_0},
	\pa X \times X \times \{0\},
	X \times \pa X \times \{0\}  \bigr\} .
\end{multline*}

\begin{proposition}
These kernels define continuous linear operators
\begin{equation*}
	\CI\lrpar{X_\eps;E_+} \to
	\CI\lrpar{X_\eps;E_-}.
\end{equation*}
\end{proposition}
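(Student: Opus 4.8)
The plan is to reduce the mapping property to the standard pushforward theorem for conormal distributions under b-fibrations, applied to the two b-fibrations $\pi_L, \pi_R : X^2_{\eps\Phi} \to X_\eps$ supplied by Proposition \ref{DoubleAdSpace}. First I would recall the general mechanism: given $A \in \Psi^k_{\eps\Phi}(X;E_+,E_-)$, its Schwartz kernel $\mathcal{K}_A$ is a conormal distribution on $X^2_{\eps\Phi}$ with respect to the lifted diagonal $\mathrm{Diag}\times[0,\eps_0]$, valued in $\Hom(E_+,E_-)\otimes\Omega_{\eps\Phi}$, and vanishing to infinite order at the four ``left'' and ``right'' boundary hypersurfaces listed in the definition (the lifts of $\pa X\times X\times[0,\eps_0]$, $X\times\pa X\times[0,\eps_0]$, $\pa X\times X\times\{0\}$, $X\times\pa X\times\{0\}$). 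For $u\in\CI(X_\eps;E_+)$ one forms $Au = (\pi_L)_*\bigl(\mathcal{K}_A\cdot \pi_R^*u\bigr)$, so the two things to check are that the product $\mathcal{K}_A\cdot\pi_R^*u$ is a well-defined conormal-type density on $X^2_{\eps\Phi}$ of the right class, and that $(\pi_L)_*$ of such a density lands in $\CI(X_\eps;E_-)$.

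The key steps, in order: (1) Since $\pi_R$ is a b-fibration transverse to the lifted diagonal, $\pi_R^*u$ is smooth on $X^2_{\eps\Phi}$ (a b-fibration pulls back smooth functions to smooth functions), and multiplication by a smooth section preserves conormality with respect to $\mathrm{Diag}\times[0,\eps_0]$ and preserves the infinite-order vanishing at the side faces. One should note here the bundle bookkeeping: the $\Omega_{\eps\Phi}$ factor in the kernel is exactly the density bundle adapted to the fibres of $\pi_R$ (fibre densities on $X^2_{\eps\Phi}$ relative to $\pi_L$), so the product is a genuine fibre density that can be integrated. (2) Apply the pushforward theorem (Melrose's pushforward theorem for b-fibrations, as in the $b$-calculus): the pushforward under a b-fibration of a distribution that is conormal to a p-submanifold meeting the fibres transversally, and that vanishes to infinite order at the relevant boundary faces, is conormal downstairs; transversality of the diagonal to $\pi_L$ (Proposition \ref{DoubleAdSpace}) collapses the conormal singularity so the image is in fact smooth, i.e.\ in $\CI(X_\eps)$. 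The vanishing hypotheses at $\pa X\times X\times[0,\eps_0]$ and $\pa X\times X\times\{0\}$ are what guarantee no boundary terms are produced under $\pi_L$, so the result is smooth up to the boundary of $X_\eps$. (3) Continuity: each of these operations — pullback by a b-fibration, multiplication, conormal pushforward — is continuous in the natural $\CI$ topologies (seminorms given by finitely many $\fB{\Phi}$- and $\eps$-vector fields applied and sup norms), so $A : \CI(X_\eps;E_+)\to\CI(X_\eps;E_-)$ is continuous.

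The main obstacle I expect is \emph{bookkeeping the geometry of the boundary faces of $X^2_{\eps\Phi}$}: one must verify that the four ``side'' faces at which the kernel is required to vanish are precisely the faces that $\pi_L$ (resp.\ $\pi_R$) maps into $\pa X_\eps$ in a way that would otherwise obstruct smoothness of the pushforward, while the ``front'' faces $\df B$, $\df F$, and $\ff(X^2_{(2)})$ — at which the kernel need not vanish — are mapped by $\pi_L$ onto all of $X_\eps$ surjectively and submersively on their interiors, so that integration along the $\pi_L$-fibres over these faces produces smooth (not singular) behavior at $\pa X_\eps$. Concretely this is a check in the local coordinates already written down in the proof of Proposition \ref{DoubleAdSpace} (the coordinates $s=x/x'$, $x'$, $u_i=(y_i-y_i')/x'$, $y_i'$, $z_j$, $z_j'$ near $\df B\cap\diag_{\eps\Phi}$, and the rescaled $S=(s-1)/x'$ coordinates near $\df F$): one writes the fibre-density $\Omega_{\eps\Phi}$ explicitly, pulls back a test section $u$ in the primed variables, and integrates out the unprimed fibre variables, checking directly that the result is smooth down to $x'=0$ and $\eps=0$. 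Since the transversality and b-fibration statements are already established in Proposition \ref{DoubleAdSpace}, this is routine but must be done carefully; I would present it as an application of the pushforward theorem with the coordinate computation relegated to a remark.
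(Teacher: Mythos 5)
Your proposal is correct and follows essentially the same route as the paper: write $Au=(\pi_L)_*(\mathcal{K}_A\cdot\pi_R^*u)$ and invoke the pull-back and push-forward theorems for b-fibrations together with the transversality and b-fibration statements of Proposition \ref{DoubleAdSpace}, with the infinite-order vanishing at the side faces and the fact that each face meeting the diagonal maps to a unique boundary face of $X_\eps$ guaranteeing smoothness of the result. The paper's proof is just a terser version of your steps (1)--(2), so no further comment is needed.
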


\begin{proof}
This follows from the push-forward and pull-back theorems of \cite{Corners} together with proposition \ref{DoubleAdSpace}. 
Indeed, given a section $f\in \CI\lrpar{X_\eps,{}^{\eps\Phi}\Omega^{1/2}}$ and an operator $A \in 	\Psi^k_{\eps\Phi}\lrpar{X;{}^{\eps\Phi}\Omega^{1/2}}$ we have
\begin{equation*}
	Af = \lrpar{\pi_L}_*\lrpar{A\cdot\pi_R^*f}.
\end{equation*}
Since $A$ vanishes to infinite order at all faces of $X^2_{\eps\Phi}$ not meeting the diagonal, and each boundary face meeting the diagonal maps down to a unique boundary face in $X_\eps$ it follows directly that smooth functions are mapped to smooth functions. In the same way, if $\curly{E}$ is a smooth index set for $X_\eps$ and $\curly{A}^{\curly{E}}$ is the associated set of polyhomogeneous functions then
\begin{equation*}
	\curly{A}^{\curly{E}} \xrightarrow{A}
	\curly{A}^{\curly{E}}.
\end{equation*}
The same argument holds with bundle coefficients with only notational differences.
\end{proof}

We will give a similar geometric proof of the composition
\begin{equation*}
	\Psi^k_{\eps\Phi}\lrpar{X;G,H} \circ 
	\Psi^{k'}_{\eps\Phi}\lrpar{X;E,G} \subset
	\Psi^{k+k'}_{\eps\Phi}\lrpar{X;E,H}.
\end{equation*}
in the appendix (Proposition \ref{AdComposition}), after we describe the corresponding triple space.

Operators in the adiabatic calculus, along with the usual interior symbol, have four `normal operators' from restricting their kernels to the different boundary faces.
These restrictions in turn have natural interpretations as operators in simpler calculi derived from the 
suspended and non-commutative suspended calculi in which the 
normal operators of the fibered cusp and fibered boundary calculi respectively lie.
All together, we have five surjective homomorphisms
\begin{equation*}
\begin{pmatrix}
\sigma \\
N_{\df{B}}\\
N_{\df{F}}\\
R_0\\
R_1
\end{pmatrix}
:\Psi^{m,k}_{\aD{\Phi}}(X;\bbE)
\longrightarrow
\begin{pmatrix}
S^{m}({}^{\aD{\Phi}}T^*X,\hom(\bbE))\\
\CI\lrpar{ \lrspar{0,1}_\eps, \Psi^k_{\Phi\text{-Nsus}}\lrpar{X;\bbE} } \\
\CI\lrpar{ \lrspar{-1,1}_\tau, \Psi^k_{\Phi\text{-sus}}\lrpar{X;\bbE} }\\
\Psi^m_{\fC{\Phi}}(X;\bbE)\\
\Psi^m_{\fB{\Phi}}(X;\bbE)\end{pmatrix}
\end{equation*}
with range respectively the homogeneous sections, 
the non-commutative $\Phi$-suspended calculus, 
the $\Phi$-suspended calculus,  
the fibered cusp and fibered boundary calculi respectively.

The model adiabatic calculus, where $N_{\df{F}}$ takes values, is a bundle of suspended pseudodifferential algebras on $Y \times \lrpar{0,1}$. 
Namely, if $\ff(X_{\eps})$ denotes the lift of $\pa X \times \{0\}$ to $X_\eps$, then the adiabatic front face of $X^2_{\eps\Phi}$ as a bundle over $Y\times \lrspar{-1,1} = \ff\lrpar{X_{\eps}}/Z$ (where  $Z$ is the fiber of $\Phi$) with fiber over 
$\{y\} \times \{\frac{x-\eps}{x+\eps} \}$ the product of $RC(W) \times Z_y \times Z_y$ with $RC(W)$ the radial compactification of the subspace 
\begin{equation*}
	W = \ker\lrpar{ {}^{\eps\Phi} TX_\eps \hookrightarrow TX_\eps}.
\end{equation*}

It is useful to see how the product on the fibers evolves as $\eps \to 0$. 
For simplicity assume we are in the case $Z = \{\pt\}$. 
In projective coordinates away from $\eps = 0$, 
\begin{equation*}
	\lrpar{x,y,s,u, \eps}= \lrpar{x,y,\frac{x'}x, \frac{y'-y}x, \eps},
\end{equation*}
the product on the fiber of the zero front face over the point $\lrpar{0,y_0, \eps}$ is given by
\begin{equation*}
	\lrpar{s,u}\cdot \lrpar{s',u'} = \lrpar{ss', u+su'}.
\end{equation*}
In the perhaps less natural coordinates
\begin{equation*}
	\lrpar{x,y,S,U,\eps} = \lrpar{x,y,\frac{x'-x}x, \frac{y'-y}x, \eps} 
\end{equation*}
this product takes the form
\begin{equation*}
	\lrpar{S,U}\cdot \lrpar{S',U'} = \lrpar{S+ S'+SS', U+U'+SU'}.	
\end{equation*}
Near the adiabatic front face we can use coordinates
\begin{equation*}
	\lrpar{x,y,\eps, \tau, S_\eps, U_\eps } 
	= \lrpar{x,y, \eps, \frac{x-\eps}{x+\eps}, \frac{x'-x}{x\lrpar{x+\eps}}, \frac{y'-y}x}
\end{equation*}
and obtain a family of products over the point $\lrpar{0,y_0,0,\tau_0}$
\begin{equation*}
	\lrpar{S_\eps,U}\cdot \lrpar{S'_\eps,U'} 
	= \lrpar{S_\eps+ S'_\eps+\eps S_\eps S'_\eps, U+U'+\eps S_\eps U'}.
\end{equation*}
Note that if $\eps = 1$ this product coincides with that on the zero front face, while at the adiabatic front face ($\eps = 0$) it coincides with the product on the scattering front face.

Note that this construction extends to families: If $X - M \xrightarrow{\phi} B$ is a fibration with $X$ as above, then carrying out the above blow-ups on $M \ftimes_B M$ (the fiber product of $M$ with itself) we obtain a fibration
\begin{equation*}
	\xymatrix{ X^2_{\eps\Phi} \ar@{-}[r] & M^2_{\eps\Phi} \ar[d] \\ & B}
\end{equation*}
which carries the kernels of families of adiabatic operators, $\Psi^*_{\eps\Phi}\lrpar{M/B;\bbE}$.

\begin{proposition}[cf. \cite{Melrose-Rochon}, Proposition C.1] \label{SameIndex}
If $P\in \Psi^m_{\eps\Phi}\lrpar{M/B;\EE}$ and the normal operators 
$\sigma\lrpar{P}$, $N_{\df{B}}\lrpar{P}$ and $N_{\df{F}}\lrpar{P}$ are invertible then 
$R_0\lrpar{P}$ and $R_1\lrpar{P}$ are both fully elliptic and have the same families index.
\end{proposition}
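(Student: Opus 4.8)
The plan is to exploit the fact that $\Psi^m_{\eps\Phi}(M/B;\EE)$ is an operator algebra whose elements restrict, at $\eps = 1$ and $\eps = 0$, to the fibered cusp and fibered boundary calculi respectively, with the remaining normal operators $\sigma$, $N_{\df B}$, $N_{\df F}$ shared across the family. First I would observe that the hypotheses — invertibility of $\sigma(P)$, $N_{\df B}(P)$, and $N_{\df F}(P)$ — are exactly the conditions needed to build a parametrix for $P$ within the adiabatic calculus modulo a residual error. Concretely, using the symbol sequence one removes the interior singularity, then one inverts $N_{\df B}(P)$ and $N_{\df F}(P)$ in their respective (suspended and non-commutative suspended) model algebras — this is possible precisely because the model calculi are closed under such inversions (one gets an inverse in the same calculus up to a possibly lower-order, but here $-\infty$, correction once the symbol has been dealt with). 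Patching these model inverses back, together with the symbolic parametrix, yields $Q \in \Psi^{-m}_{\eps\Phi}(M/B;\EE)$ with $PQ - \Id$ and $QP - \Id$ in the ideal of operators vanishing to infinite order at $\df B$, $\df F$, and the interior diagonal — i.e.\ in an algebra of smoothing families parametrized by $\eps$. The composition result (Proposition \ref{AdComposition}, which I may assume) guarantees all of this stays inside the adiabatic calculus.

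Next I would argue that restricting $Q$ to $\eps = 1$ and $\eps = 0$ produces parametrices for $R_0(P)$ in the $\fC\Phi$ calculus and $R_1(P)$ in the $\fB\Phi$ calculus respectively, with residual errors that are the restrictions of the adiabatic residual errors. Since a fibered cusp (resp.\ fibered boundary) operator is Fredholm iff its interior symbol and its normal operator are invertible, and both of these are inherited from the adiabatic family ($\sigma(R_0(P)) = \sigma(P)|_{\eps=1}$, $N_{\fC\Phi}(R_0(P)) = N_{\df F}(P)|_{\eps=1}$, and similarly for $R_1$), full ellipticity of $R_0(P)$ and $R_1(P)$ follows immediately. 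Thus the first assertion is essentially a bookkeeping consequence of the normal-operator structure, once one checks which model operator restricts to which — the only subtlety being to verify that $N_{\df F}(P)|_{\eps = 1}$ really is the fibered cusp normal operator and $N_{\df F}(P)|_{\eps=0}$ the fibered boundary normal operator, which the explicit product deformation computed above ($\eps = 1$ giving the zero/edge front-face product, $\eps = 0$ the scattering/$\phi$ product) is designed to confirm.

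For the equality of indices, the key point is that $P$, together with its adiabatic parametrix $Q$, gives a continuous (indeed smooth) family of Fredholm operators over $[0,\eps_0] \times B$ — continuity in $\eps$ down to $\eps = 0$ being exactly what the blown-up space $X^2_{\eps\Phi}$ and the mapping properties on $\CI(X_\eps)$ were constructed to provide. A family of Fredholm operators over $[0,\eps_0] \times B$ has an index class in $K^0([0,\eps_0] \times B) \cong K^0(B)$, and restriction to $\{\eps\} \times B$ is the identity on $K^0(B)$ for every $\eps$; hence the index class of $R_0(P) = P|_{\eps=1}$ equals that of $R_1(P) = P|_{\eps=0}$ in $K^0(B)$. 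The cleanest way to make this rigorous is via the stabilization/finite-rank-perturbation picture: perturb $P$ by a smoothing family so that $\ker$ and $\coker$ have locally constant dimension over a neighborhood of each $\eps$, giving a genuine $K$-theory representative, then use a compactness/partition-of-unity argument in $\eps$. The main obstacle I anticipate is precisely this last uniformity statement — ensuring that the parametrix and the resulting finite-rank reduction behave continuously as $\eps \to 0$, i.e.\ that no index "leaks" in the adiabatic limit. This is controlled by the fact that the residual errors lie in a smoothing algebra that includes $\eps = 0$ on an equal footing with $\eps > 0$, so the relevant families are norm-continuous up to the limit; but checking this carefully (uniform operator bounds, continuity of the family of projections onto kernels) is where the real work lies, and it is what Proposition C.1 of \cite{Melrose-Rochon} accomplishes in the analogous setting that this proposition is modeled on.
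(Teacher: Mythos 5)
Your proposal is correct and follows essentially the same route as the paper: a symbolic parametrix refined at each boundary face using the invertible normal operators, yielding residues in $x^{\infty}\Psi^{-\infty}_{\eps\Phi}$, full ellipticity of the two restrictions, and then a smoothing (finite-rank) stabilization making the index bundle well defined and locally constant over $[0,\eps_0]\times B$. The only slip is a labeling one: in the paper's convention $R_0$ is the restriction to $\eps=0$ (the fibered cusp end) and $R_1$ to $\eps=\eps_0$ (the fibered boundary end), whereas you pair them the other way; since the argument is symmetric in the two endpoints this does not affect its validity.
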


\begin{proof}
The invertibility of $\sigma\lrpar{P}$ allows the construction of a parametrix \newline $Q_0 \in \Psi^{-m}_{\eps\Phi}\lrpar{M/B;E_-, E_+}$ with residues
\begin{equation*}
	\Id-Q_0P \in \Psi^{-\infty}_{\eps\Phi}\lrpar{M/B;E_+}, \phantom{xxx}
	\Id-PQ_0 \in \Psi^{-\infty}_{\eps\Phi}\lrpar{M/B;E_-}.
\end{equation*}
Each invertible normal operator permits this to be refined to a parametrix with residues vanishing to infinite order at the corresponding boundary face. Thus the hypothesis of the proposition permit us to find $Q \in \Psi^{-m}_{\eps\Phi}\lrpar{M/B;\EE_{-}}$ with residues
\begin{equation*}
	\Id-QP \in x^{\infty}\Psi^{-\infty}_{\eps\Phi}\lrpar{M/B;E_+}, \phantom{xxx}
	\Id-PQ \in x^{\infty}\Psi^{-\infty}_{\eps\Phi}\lrpar{M/B;E_-}.
\end{equation*}
The induced equations for 
$R_0\lrpar{P}$ and $R_1\lrpar{P}$ 
show that these operators are fully elliptic hence Fredholm.

The residues above are families of compact operators over $\lrspar{0,1}$ in the intersection of the fibered cusp and fibered boundary calculi, hence it is possible to stabilize. That is, we can find an operator 
\begin{equation*}
	A \in x^{\infty}\Psi^{-\infty}_{\eps\Phi}\lrpar{M/B;\EE} 
	= \CI\lrpar{ \lrspar{0,1}_{\eps}, \dot{\Psi}^{-\infty}\lrpar{M/B;\EE} }
\end{equation*}
such that the null space of $P+A$ is in $\dot{\mathcal{C}}^{\infty}\lrpar{M/B;E_+}$ and forms a trivial smooth bundle over $\lrspar{0,1}\times B$ (see \cite[Lemma 1.1]{Melrose-Rochon}). It follows that the index bundles for 
$R_0\lrpar{P}$ and $R_1\lrpar{P}$ coincide in $K\lrpar{B}$.
\end{proof}

\begin{theorem} \label{AdMap}
The adiabatic construction above induces a natural map
\begin{equation*}
	\cK^*_{\fC{\Phi}}\lrpar{\phi} \to
	\cK^*_{\fB{\Phi}}(\phi),
\end{equation*}
which restricts to 
\begin{equation*}
	\cK^*_{\fC{\Phi},-\infty}\lrpar{\phi} \to 
	\cK^*_{\fB{\Phi},-\infty}(\phi),	
\end{equation*}
and fits into the commutative diagram
\begin{equation*}
	\xymatrix{
	& \cK^*_{\fC{\Phi}}\lrpar{\phi} \ar[ld]_{\mathrm{ind}_a} \ar[d] \ar[rd]^\sigma & \\
	K^*\lrpar{B} & \cK^*_{\fB{\Phi}}\lrpar{\phi} \ar[l]_{\mathrm{ind}_a} \ar[r]^(.4)\sigma
	& K_{c}\lrpar{T^*\lrpar{X/B}} }
\end{equation*}
\end{theorem}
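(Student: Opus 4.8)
The plan is to define the map by lifting a Fredholm fibered-cusp operator to a fully elliptic family in the adiabatic calculus, restricting that family to the fibered-boundary end, and invoking Proposition \ref{SameIndex} for well-definedness and for commutativity with the analytic index, following the pattern of \cite[\S C]{Melrose-Rochon}. Concretely, let $A\in A_{\fC{\Phi}}\lrpar{M;E,F}$, so that $\sigma(A)$ and $\cN(A)$ are invertible, the latter lying in the $\Phi$-suspended calculus. Since $R_0$ is surjective we may choose $P\in\Psi^0_{\eps\Phi}\lrpar{M/B;E,F}$ with $R_0(P)=A$; the work is to arrange that the remaining model operators $\sigma(P)$, $N_{\df{B}}(P)$, $N_{\df{F}}(P)$ are also invertible, i.e. that $P$ is fully elliptic in the adiabatic sense. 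The interior symbol is insensitive to the adiabatic parameter, so $\sigma(P)$ may be taken to be the pullback of $\sigma(A)$ and is invertible. The families $N_{\df{F}}(P)$ over $[-1,1]_\tau$ and $N_{\df{B}}(P)$ over $[0,1]_\eps$ take values in the $\Phi$-suspended and the non-commutative $\Phi$-suspended model calculi, and at the corners where $\df{B}$, $\df{F}$ and the $\eps=0$ face meet they are forced to restrict to $\cN(A)$ and to the $\fB{\Phi}$-normal operator of $R_1(P)$. The interpolating family of group products computed in Section \ref{AdCalc} realizes the commutative suspension as the $\eps=0$ specialization of the non-commutative one, so the single convolution kernel $\cN(A)$ extends canonically to these $\tau$- and $\eps$-families; invertibility is propagated using that invertibility of convolution operators is an open condition stable under the interpolation of products, any residual obstruction being removed by a version of the stabilization argument of \cite[Lemma 1.1]{Melrose-Rochon} carried out parametrically over $B$. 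One then sets $\ad[A]=[R_1(P)]\in\cK^0_{\fB{\Phi}}\lrpar{\phi}$.

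For well-definedness, two lifts of the same $A$, and more generally lifts of two homotopic Fredholm representatives, are joined by a path of fully elliptic adiabatic families (again by surjectivity of the joint normal-operator data together with stabilization, now parametrized by the homotopy variable); applying $R_1$ yields a homotopy of Fredholm $\fB{\Phi}$-operators, so the class $[R_1(P)]$ does not change. Bundle isomorphisms and stabilizations lift verbatim to the adiabatic calculus and commute with $R_1$, so $\ad$ is well defined on $\cK^0_{\fC{\Phi}}\lrpar{\phi}$. If $\sigma(A)=\Id$ one may take $P=\Id+P'$ with $P'\in\Psi^{-\infty}_{\eps\Phi}\lrpar{M/B;E}$, so that $R_1(P)=\Id+R_1(P')$ again has identity symbol, giving the restriction $\cK^0_{\fC{\Phi},-\infty}\lrpar{\phi}\to\cK^0_{\fB{\Phi},-\infty}\lrpar{\phi}$. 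The odd groups are treated identically by lifting a based loop $s\in\Omega A_{\fC{\Phi}}\lrpar{M;E,F}$ to a based loop of adiabatic families, with $s=\Id$ lifting to $\Id$, and composing with $R_1$; naturality in $B$ is immediate because every step is carried out fibrewise over $B$.

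The diagram now follows. The triangle with $\mathrm{ind}_a$ is exactly Proposition \ref{SameIndex}: $\mathrm{ind}_a(\ad[A])=\mathrm{ind}_a(R_1(P))=\mathrm{ind}_a(R_0(P))=\mathrm{ind}_a(A)$ in $K^*\lrpar{B}$, using for the last equality that $R_0(P)=A$. For the triangle with $\sigma$, the symbols $\sigma(R_1(P))$ and $\sigma(R_0(P))=\sigma(A)$ are the restrictions of the single invertible $\eps\Phi$-symbol $\sigma(P)$ to the two ends of $[0,\eps_0]$, and $\sigma(P)$ is itself a homotopy of invertible symbols between them, so they represent the same class in $K_c\lrpar{T^*\lrpar{X/B}}$ (the $\fC{\Phi}$- and $\fB{\Phi}$-cosphere bundles carrying the same compactly supported K-theory). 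Hence $\sigma\circ\ad=\sigma$.

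The main obstacle is the lifting step. Producing a fully elliptic adiabatic family extending a given fully elliptic fibered-cusp operator requires solving the compatibility conditions among $\sigma$, $N_{\df{B}}$ and $N_{\df{F}}$ at all corners of $X^2_{\eps\Phi}$ with invertibility preserved, and then doing so in families over both $B$ and the homotopy parameter. This rests on the composition result of the appendix (Proposition \ref{AdComposition}), so that parametrices can be formed in the adiabatic calculus, on the precise matching of the normal operators of normal operators at the corners of corners, and on the parametric form of the adiabatic stabilization lemma; the remaining verifications are routine bookkeeping.
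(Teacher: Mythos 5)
Your proposal follows essentially the same route as the paper: lift the fully elliptic $\fC{\Phi}$ family to a fully elliptic adiabatic family (extending the normal data from the $\eps=0$ face to $\df F$, $\df B$ and the interior, with invertibility at $\df B$ secured for small $\eps$ by openness and then globalized by rescaling or a finite-rank stabilizing perturbation), apply $R_1$, and invoke Proposition \ref{SameIndex} for full ellipticity and equality of the families index, with well-definedness checked by lifting homotopies, stabilizations and bundle isomorphisms. The treatment of the restriction to the $-\infty$ subgroups, the odd case via loops, and both triangles of the diagram matches the paper's argument.
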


\begin{proof}
Given a fibered cusp pseudodifferential operator, $P \in \Psi^0_{\fC{\Phi}}\lrpar{M/B;\EE}$ we can construct an element of $\Psi^0_{\eps\phi}\lrpar{M/B;\EE}$ by following the construction in \cite[Proposition 8]{Melrose-Rochon}.
First the kernel of $P$ gives us the normal operator of a putative $P\lrpar{\eps}$ at the (lift of) $\eps=0$. This fixes the boundary value for the face $\df{F}$, which as we have just seen is fibered over the lifted variable 
\begin{equation*}
	\tau = \frac{x-\eps}{x+\eps} \in \lrspar{-1,1},
\end{equation*}
with fiber given by the front face in the fibered cusp calculus. Knowing the value at $\tau=1$, we extend to the rest of $\df{F}$ to be independent of $\tau$.
This fixes the kernel at the boundary of $\df{B}$ and we 
simply extend it as a conormal distribution to the diagonal and vanishing to infinite order at the other boundary faces.
Having defined a kernel consistently on the boundary of $M^2_{\eps\phi}$ we can choose an extension which is conormal to the diagonal in the interior, thus obtaining an element $P\lrpar{\eps} \in \Psi^0_{\eps\Phi}\lrpar{M/B;\EE}$.

If $P$ is Fredholm then the normal operators $\sigma\lrpar{P\lrpar{\eps}}$ and $N_{\df{F}}\lrpar{P\lrpar{\eps}}$ are invertible as is $N_{\df{B}}\lrpar{P\lrpar{\eps}}$ for small enough $\eps$, say $\eps \in \lrspar{0,\delta}$. Of course we can arrange for $N_{\df{B}}\lrpar{P\lrpar{\eps}}$ to be invertible for all $\eps$, for instance by rescaling $\lrspar{0,\delta}$ to $\lrspar{0,1}$ or by perturbing $P\lrpar{\eps}$ with a family $A\lrpar{\eps}$ such that $N_{\df{B}}\lrpar{A\lrpar{\eps}}$ is a finite rank smoothing perturbation vanishing at $\eps =0$ and making the family $N_{\df{B}}\lrpar{P\lrpar{\eps}}$ invertible (as in \cite[Remark 1.2]{Melrose-Rochon}).

The invertibility of these normal operators allows us to apply Proposition \ref{SameIndex} and conclude that $R_1\lrpar{P\lrpar{E}}$ is fully elliptic.
Thus, in terms of the maps
\begin{equation*}
	\xymatrix{ & \Psi_{\eps\Phi}^0(M/B; \EE) \ar[ld]^{R_0}  \ar[rd]_{R_1} & \\ \Psi^0_{\fC{\Phi}}(M/B;\EE) & & \Psi^0_{\fB{\Phi}}(M/B;\EE) }
\end{equation*}
we see that we can lift each fully elliptic family $P$ in the $\fC\Phi$ calculus via $R_0$ to a family $P(\eps)$ in the adiabatic calculus such that $R_1(P(\eps))$ is a fully elliptic family in the $\fB\Phi$ calculus.
Notice that the choice of $\delta$ and extension to $\eps=1$ do not change the homotopy class of $R_1(P(\eps))$. Furthermore, we can always extend a homotopy of $P$ to a homotopy of $P\lrpar{\eps}$ as well as the other equivalence relations defining the K-groups (stabilization and bundle isomorphism), so we have a map
\begin{equation*}
	\cK^*_{\fC{\Phi}}\lrpar{\phi} \ni \lrspar{P} \xrightarrow{\ad}
	\lrspar{R_1\lrpar{P\lrpar{\eps}}} \in \cK^*_{\fB{\Phi}}\lrpar{\phi},
\end{equation*}
well-defined independently of choices.

From the construction it is clear that 
perturbations of the identity by a smoothing operator are preserved as is the interior symbol, 
and from Proposition \ref{SameIndex} so is the families index.
\end{proof}

\section{Fredholm perturbations of the identity} \label{sec:FredRes}

In this section, we prove that the map
\begin{equation*}
	\cK^*_{\fC{\Phi},-\infty}\lrpar{\phi} \to 
	\cK^*_{\fB{\Phi},-\infty}\lrpar{\phi},	
\end{equation*}
induced by the adiabatic construction in Theorem \ref{AdMap} is an isomorphism.

We first recall the excision argument from \cite{Melrose-Rochon} and reduce to the case $\Phi=\Id$, i.e., the scattering and zero calculi.
We start by replacing $M$ with a collar neighborhood of its boundary, $\pa M \times \lrspar{0,1}_x$.
The K-theory groups we are considering are made up of equivalence classes of normal operators, so there is no loss in restricting attention to the quantizations that are supported in this neighborhood, and reduce to the identity at $x=1$.

The fibration extends off the boundary into the entire neighborhood $\pa M \times \lrspar{0,1}_x$, so we have the simpler situation
\begin{equation*}
	\xymatrix { 
	Z \times \lrspar{0,1} \ar@{-}[r] & \pa X \times \lrspar{0,1} \ar@{-}[r] \ar[d] 
		& \pa M \times \lrspar{0,1} \ar[d]\\
	& Y \times \lrspar{0,1} \ar@{-}[r] & D \times \lrspar{0,1} \ar[d]^{\wt{\phi}} \\
	& & B}
\end{equation*}
where furthermore we have the following reduction.

\begin{lemma}\label{ExcisionLemma}
There are `excision' isomorphisms
\begin{equation*}
	\cK^*_{\fC{\Phi},-\infty}\lrpar{\phi} \cong
	\cK^*_{\mathrm{sc},-\infty}(\wt{\phi})
	\text{ and }
	\cK^*_{\fB{\Phi},-\infty}\lrpar{\phi} \cong
	\cK^*_{0,-\infty}(\wt{\phi})
\end{equation*}
\end{lemma}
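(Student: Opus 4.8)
The plan is to exploit the fact that both $\cK_{\fC\Phi,-\infty}$ and $\cK_{\fB\Phi,-\infty}$ are, by definition, built entirely from normal operators (equivalence classes of $N(\Id+A)$ with $A \in \Psi^{-\infty}$), so nothing in these groups "sees" the interior of $M$ away from the boundary. Thus the first step is to justify the reduction to a collar $\pa M \times [0,1]_x$: given any Fredholm perturbation of the identity $\Id + A$ with $A$ smoothing, one can cut off $A$ to be supported in $x < 1/2$ and equal to the identity near $x=1$ without changing its class, using the homotopy $A_t$ obtained by scaling the kernel toward the front face $\df B$. Because the normal operator at $\df B$ (resp.\ the non-commutative suspended face) is unchanged under such a cutoff, and the interior symbol is the identity throughout, this is a homotopy through Fredholm operators, so it descends to the $\cK$-groups. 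This gives the map $\cK^*_{\fC\Phi,-\infty}(\phi) \to \cK^*_{\mathrm{sc},-\infty}(\wt\phi)$ (and similarly for $\fB\Phi$ vs.\ $0$) by simply regarding a collar-supported quantization as a scattering (resp.\ zero) quantization on the total space of $\wt\phi$.

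Second, I would construct the inverse map. Starting from a Fredholm perturbation of the identity in the scattering calculus on $\pa M \times [0,1] / D\times[0,1]$, I extend its kernel back to $M^2_{\fC\Phi}$ (over $B$) by the same consistency-of-boundary-values construction used in the proof of Theorem~\ref{AdMap}: the scattering normal operator prescribes the kernel on the relevant boundary face, one extends it to be conormal to the diagonal and to vanish to infinite order at the remaining faces, and then picks any interior extension. Because we are in the $-\infty$ (smoothing) setting the interior symbol issue is vacuous, and the relevant normal operator is carried along unchanged, so the result is again Fredholm. The two maps are mutually inverse because, on the level of normal operators, cutting off and re-extending returns the same data up to a contractible choice of extension; the ambiguity in the extension is killed by homotopy, exactly as in \cite[Proposition 8]{Melrose-Rochon} and the argument of Theorem~\ref{AdMap}.

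The point that needs genuine care — and that I expect to be the main obstacle — is the compatibility of the fibration structure with the collar. One must check that the boundary fibration $\Phi : \pa M \to D$ extends to a genuine fibration of $\pa M \times [0,1]$ over $D \times [0,1]$ compatible with the chosen $\fC\Phi$ (resp.\ $\fB\Phi$) structure, i.e.\ that the singled-out class of boundary defining functions on $M$ restricts to the expected class on the collar, so that the scattering (resp.\ zero) calculus on $D \times [0,1] \to B$, with its own boundary at $x=0$, genuinely agrees with the restriction of the $\fC\Phi$ (resp.\ $\fB\Phi$) calculus near $\pa M$. Concretely, one is using the identification ${}^{\fC\Phi}TM \rest{\text{collar}} \cong {}^{\mathrm{sc}}T(\pa M\times[0,1])$ pulled back appropriately, and the analogous statement for ${}^{\fB\Phi}TM$ versus ${}^{0}T(\pa M\times[0,1])$; these follow from the local spanning vector fields listed in \S\ref{Background} once one checks the boundary-defining-function conventions line up. The remaining verifications — that the cutoff homotopy is through \emph{Fredholm} operators, and that the extension construction respects the equivalence relations \eqref{Equiv1}–\eqref{Equiv3} — are then routine, paralleling the corresponding arguments in \cite{Melrose-Rochon} for the scattering case and in \cite{Albin-Melrose} for the zero case.
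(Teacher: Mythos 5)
Your first step---that the $-\infty$ smooth K-theory groups depend only on normal operators, so one may localize to a collar neighborhood of $\pa M$ and take quantizations equal to the identity at $x=1$---is correct and is exactly how the paper begins. But the rest of the proposal misidentifies what is being excised. The target of the lemma is $\cK^*_{\mathrm{sc},-\infty}(\wt\phi)$ for the fibration $\wt\phi : D\times[0,1]\to B$, whose fibers are $Y\times[0,1]$; the fibers $Z$ of the boundary fibration have been removed, not just the interior of $M$. Consequently the isomorphism cannot be a mere reinterpretation of a collar-supported kernel: on $\pa M\times[0,1]$ the $\fC{\Phi}$ structure is spanned near $x=0$ by $x^2\pa_x,\ x\pa_{y},\ \pa_z$, which is \emph{not} the scattering structure of $\pa M\times[0,1]$ (that would require $x\pa_z$), and in any case the two calculi in the statement live on manifolds of different dimensions. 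The compatibility check you flag as ``the main obstacle'' is therefore aimed at the wrong issue, and the identification ${}^{\fC{\Phi}}TM|_{\mathrm{collar}}\cong{}^{\mathrm{sc}}T(\pa M\times[0,1])$ you invoke is false whenever $\dim Z>0$.

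The missing idea---which is the entire content of the paper's proof---is the finite-rank compression of the boundary data in the $Z$-directions. Given $b\in\Psi^{-\infty}_{\Phi-sus}(\pa M/D;E)$ with $\Id+b$ invertible, the results of Rochon (Lemma 6.3) and Melrose--Rochon (proof of Prop.\ 3.1) allow one to deform $b$ so that it acts on a finite-rank subbundle $W$ of $\CI(\pa M/D)$ pulled back to $T^*(D/B)\times\bbR$. Only after this compression can $\Id+b$ be read in two ways: as the normal operator of a family in $\Psi^{-\infty}_{\fC{\Phi}}(M/B;E)$, or as the normal operator of a family in $\Psi^{-\infty}_{\mathrm{sc}}(D\times[0,1]/B;W)$; matching up the two readings (and the analogous non-commutative suspended version for $\fB{\Phi}$ versus the zero calculus) is what produces the excision isomorphisms. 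Without this step your ``inverse map'' has no chance of landing in the scattering calculus on $D\times[0,1]$, so the argument as written does not prove the lemma.
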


\begin{proof}
Recall \cite[Lemma 6.3]{Rochon}, \cite[proof of Prop. 3.1]{Melrose-Rochon} that given
\begin{equation*}
	b \in \Psi^{-\infty}_{\Phi\text{-sus}}\lrpar{\pa M/D;E}
\end{equation*}
such that $\Id + b$ is invertible, we can assume that $b$ acts on a finite-rank sub-bundle $W$ of $\CI\lrpar{\pa M/D}$ pulled-back to $T^*\lrpar{D/B}\times \RR$ and then think of $b$ as the boundary family of a family in $\Psi_{\fC{\Phi}}^{-\infty}\lrpar{M/B;E}$ or in $\Psi_{\mathrm{sc}}^{-\infty}\lrpar{D \times \lrspar{0,1}/B;W}$.
Doing the same to fibered boundary operators we obtain the isomorphisms above.
\end{proof}

Hence, it suffices to show that the adiabatic homomorphism between $\cK^*_{\mathrm{sc},-\infty}(\wt{\phi})$ and $\cK^*_{0,-\infty}(\wt{\phi})$ is an isomorphism. Both the kernel of an operator in $\Psi_{\mathrm{sc}}^{-\infty}\lrpar{M;E}$ and the kernel of an operator in $\Psi_0^{-\infty}\lrpar{M;E}$ are, near their respective front faces, translation invariant in the `horizontal' directions. We can apply the Fourier transform in these directions and restrict to the front face, and obtain in either case an element of 
\begin{equation}\label{InftyKernel}
	\Sc\lrpar{T^*\partial M} \wh{\otimes}_\pi \dot{C}^{\infty}\lrpar{\lrspar{-1,1}} \wh{\otimes}_\pi \Hom\lrpar{E}.
\end{equation}
In the scattering case this is easily recognized as defining a class in \newline
$K^{-1}_c\lrpar{T^*\pa M\times \RR} \cong K^{-2}_c\lrpar{T^*\pa M}$ and respecting the product structure.
We show in the following theorem that the zero case defines the same class in $K^0\lrpar{T^*\pa M}$. 
The coincidence of the classes defined by the commutative (scattering) product with that defined by the non-commutative (zero) product is a manifestation of Bott periodicity (cf. \cite[Proposition 9.9]{HHH}).

\begin{theorem}\label{Thm3Sc}
The map $\cK^*_{\mathrm{sc},-\infty}(\wt{\phi}) \to \cK^*_{0,-\infty}(\wt{\phi})$ induced by the adiabatic calculus is an isomorphism and fits into the commutative diagram
\begin{equation*}
	\xymatrix{ 
	\cK_{\mathrm{sc}, -\infty}^*\lrpar{\phi} \ar[r]^{\cong} \ar[d] 
		& K^*_c\lrpar{T^*\pa M/B} \\
	\cK_{0, -\infty}^*\lrpar{\phi} \ar[ur]^{\cong} }
\end{equation*}
\end{theorem}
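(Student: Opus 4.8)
The plan is to prove both the commutativity of the triangle and, from it, that the vertical adiabatic map is an isomorphism, by reducing everything to an explicit model on the front faces. First I would use the excision reduction already in place (Lemma \ref{ExcisionLemma}) to think of elements of $\cK^*_{\mathrm{sc},-\infty}(\wt\phi)$ and $\cK^*_{0,-\infty}(\wt\phi)$ concretely as their front-face data: as recalled just before the theorem, an element of $\Psi_{\mathrm{sc}}^{-\infty}$ (resp.\ $\Psi_0^{-\infty}$) near the front face is translation-invariant in the horizontal variables, so Fourier transform in those directions and restriction to the front face produces, in both cases, a class represented in the common function space \eqref{InftyKernel}. The difference between the two calculi is then entirely carried by the group law on the fibers: the commutative product $\mathbb{R}\times\mathbb{R}^h$ in the scattering case versus the non-commutative product $\mathbb{R}^+\ltimes\mathbb{R}^h$ in the zero case. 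The adiabatic family constructed in Theorem \ref{AdMap}, restricted to these front faces, is precisely the interpolating family of products displayed in the text, $(S_\eps,U)\cdot(S'_\eps,U')=(S_\eps+S'_\eps+\eps S_\eps S'_\eps,\ U+U'+\eps S_\eps U')$, which at $\eps=0$ is the scattering (additive) product and at $\eps=1$ is the zero product.

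Second, I would show the adiabatic map is \emph{surjective} and \emph{injective} on these K-groups by a homotopy-lifting argument analogous to the proof of Theorem \ref{AdMap}, now one dimension up. Surjectivity: given a fully elliptic perturbation of the identity $\Id+A$ in $\Psi^{-\infty}_{0,-\infty}$, its reduced normal operator lies (by Proposition \ref{CharacRes}) in the $b,c$-calculus on the interval; I would produce an adiabatic family whose $R_1$ is $\Id+A$ by prescribing its $\df F$ face to be the scattering model and reconstructing along $\tau$, exactly reversing the construction in the proof of Theorem \ref{AdMap}, and then read off $R_0$ as the desired scattering preimage; invertibility of the three normal operators along the family gives, by Proposition \ref{SameIndex}, that $R_0$ is fully elliptic, and the adiabatic family is a homotopy of classes. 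Injectivity: if a scattering class maps to $0$ in $\cK_{0,-\infty}^*$, lift the nullhomotopy in the zero calculus back through $R_1$ to an adiabatic nullhomotopy (using that $x^\infty\Psi^{-\infty}_{\eps\Phi}$ is a family of smoothing operators over $[0,1]$, so stabilization is available as in Proposition \ref{SameIndex}), and take $R_0$ to obtain a scattering nullhomotopy.

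Third, for commutativity of the triangle I would identify the right-hand map $\cK_{0,-\infty}^*\to K^*_c(T^*\pa M/B)$ with the one already understood from \cite{Albin-Melrose}, and check that under the front-face description both the scattering identification (which, as noted, is the manifestly correct one, factoring through $K^{-1}_c(T^*\pa M\times\mathbb{R})\cong K^{-2}_c(T^*\pa M)$ and Bott periodicity) and the zero identification send a class represented in \eqref{InftyKernel} to the same element of $K^*_c(T^*\pa M/B)$. This is where the reference to \cite[Proposition 9.9]{HHH} enters: the assertion that a loop of invertible perturbations of the identity built from the non-commutative (solvable-group) convolution product defines the same K-class as the one built from the commutative (additive) product is exactly a statement that the two group structures on $W$-valued symbols give the same Bott element. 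So I would invoke (or reprove in this setting) that comparison, with the interpolating product above supplying the explicit homotopy of group laws from $\eps=0$ to $\eps=1$, hence a homotopy of the associated idempotents/loops in $\Psi^{-\infty}$.

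The main obstacle I expect is this last step: showing that the \emph{non-commutative} convolution product on the zero front face — convolution on $\mathbb{R}^+\ltimes\mathbb{R}^h$, which is genuinely a solvable (non-unimodular) Lie group, not just a deformation of $\mathbb{R}^{h+1}$ — assigns to a loop of units the same K-theory class as ordinary Fourier-multiplier composition does. The interpolating family of products is the natural candidate homotopy, but one must check that the corresponding family of operators stays within the adiabatic calculus (closure under composition, which the appendix supplies) and that fully ellipticity is preserved along the whole interval, so that the index bundle — and not merely the symbol data — is constant; this is precisely what Proposition \ref{SameIndex} is designed to give, so the real work is in verifying its hypotheses for the specific families arising from \eqref{InftyKernel}, i.e.\ that the $\df B$ normal operator can be kept invertible after a finite-rank smoothing correction vanishing at $\eps=0$, as in \cite[Remark 1.2]{Melrose-Rochon}.
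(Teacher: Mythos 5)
Your step 2 — proving the isomorphism directly by lifting a fully elliptic $\Id+A$ in the zero calculus backwards through the adiabatic space — is where the argument breaks down, and you have in fact put your finger on the problem yourself in your final paragraph without resolving it. The forward construction of Theorem \ref{AdMap} works because the data at $\eps=0$ (the face $\df{F}$, fibered over $\tau$) determines the boundary of $\df{B}$ at $\eps=0$, after which one extends over $\eps\in[0,1]$ and can restore invertibility of $N_{\df{B}}$ by rescaling or a finite-rank perturbation. Reversing this, you are given an invertible normal operator only on the $\eps=1$ slice of $\df{B}$ and must extend it invertibly down to $\eps=0$ through the degenerating family of products $(S_\eps,U)\cdot(S'_\eps,U')$, and then fill in $\df{F}$ invertibly. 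The obstruction to such an extension lives precisely in the K-theory groups you are trying to compare: "prescribing the $\df{F}$ face to be the scattering model" presupposes a way of assigning a commutative-product invertible to a non-commutative-product invertible, which is the content of the theorem. Proposition \ref{SameIndex} does not help here, since it takes as hypothesis the invertibility of $N_{\df{B}}$ over all of $[0,1]$, which is exactly what you cannot yet guarantee.

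The paper avoids this entirely, and your step 3 already contains the ingredients to do the same. Both legs of the triangle are \emph{independently} known to be isomorphisms: $\cK^*_{\mathrm{sc},-\infty}\cong K^*_c(T^*\pa M/B)$ from \cite{Melrose-Rochon}, and $\cK^*_{0,-\infty}\cong K^*_c(T^*\pa M/B)$ from \cite{Albin-Melrose}. The only thing to check is that the composite (adiabatic map followed by the zero-calculus identification) agrees with the scattering identification; this uses only the \emph{forward} lifting of Theorem \ref{AdMap}, the $\eps$-independence of the class assigned by the $\eps_0$-slice isomorphisms (smoothness of the adiabatic product), and the comparison of the commutative and $b,c$-quantized products via \cite[Prop.~9.9, Thm.~9.5]{HHH} — which is your step 3 almost verbatim. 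The isomorphism of the vertical map is then immediate by two-out-of-three in the commuting triangle. So the fix is structural rather than technical: delete your step 2, import the two identifications from the cited papers, and let the commutativity you establish in step 3 do the work.
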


Before proving this theorem, we quickly review the relevant descriptions of the topological K-theory groups from \cite{Melrose-Rochon0}.
Recall that for any manifold $X$ we can describe $K^1\lrpar{X}$ as stable homotopy classes of maps
\begin{equation*}
	K^1\lrpar{X} = \lim_{\to} \lrspar{X; \mathrm{GL}\lrpar{N} },
\end{equation*}
so also \cite[Proposition 3]{Melrose-Rochon0},
\begin{equation}\label{K1def}
	K^1\lrpar{X} = \lrspar{X; \dot{G}^{-\infty}\lrpar{V;E} },
\end{equation}
where $V$ is any compact manifold with boundary and 
\begin{equation*}
	\dot G^{-\infty}(V;E)
	= \lrbrac{ \Id + A : A \in \dot\Psi^{-\infty}(X;E)
	 \text{ and } (\Id + A) \text{ is invertible} }.
\end{equation*}
Similarly, we can define $K^0(X)$ from $K^1(X)$ by suspension, i.e., 
\begin{equation*}
	K^0\lrpar{X} 
	= \lim_{\to} \lrspar{X; C^{\infty}\lrpar{ \lrpar{\mathbb{S}^1, 1}; \lrpar{\mathrm{GL}\lrpar{N}, \Id}}}
\end{equation*}
and this can also be thought of as \cite[Proposition 4]{Melrose-Rochon0}
\begin{equation}\label{K0def}
	K^0\lrpar{X} 
	= \lrspar{X; G^{-\infty}_{\sus}\lrpar{U;E}},
\end{equation}
with $U$ any closed manifold of positive dimension and, using $\cS$ to denote Schwartz functions,
\begin{equation*}
	G^{-\infty}_{\sus}\lrpar{U;E}
	= \lrbrac{ \Id + A : A \in \cS( \bbR; \Psi^{-\infty}(U;E) )
	 \text{ and } (\Id + A) \text{ is invertible} }.
\end{equation*}
For non-compact manifolds the same definitions apply, but the families of operators are required to be equal to the identity outside a compact set. Homotopies are also required to be the identity outside a compact set, however, in either case, the compact set is not fixed.

\begin{proof}[Proof (of Theorem \ref{Thm3Sc})]
Recall that in \cite{Albin-Melrose} it is shown that the groups $\cK_{0, -\infty}^q(\phi)$ are canonically isomorphic to the groups $K_c^q(T^*\pa M/B)$.
So, for each $\eps_0 >0$, we have an isomorphism into 
$K^0_c\lrpar{T^*\pa M/B}$
consistent with the product on the $\eps =\eps_0$ slice of the adiabatic calculus.
The smoothness of the product in the adiabatic calculus shows that, for any
Fredholm operator $\Id + A \in \Psi_{\eps\phi}^{-\infty}$,
we obtain for each $\eps_0 >0$ the same class in $K^0_c\lrpar{T^*\pa M}$.

Thus we have two homomorphisms from 
$\cK^*_{\mathrm{sc},-\infty}(\phi)$ into $K^0_c\lrpar{T^*\pa M}$. 
The first is the isomorphism given by identifying \eqref{InftyKernel} with an element of 
$K^{-1}_c\lrpar{T^*\pa M\times \RR} \cong K^{-2}_c\lrpar{T^*\pa M}$. The second comes from extending the given Fredholm scattering operator into a Fredholm adiabatic operator and applying the isomorphism of \cite{Albin-Melrose}.
The first can be represented by a map
\begin{equation}\begin{split}\label{ComCase}
	\lrspar{ T^*\pa M \times \RR ; \GL_N\lrpar{\CC}}
	&= \lrspar{S^*\pa M \times \RR_+ \times \RR; \GL_N\lrpar{\CC}}_0\\
	&= \lrspar{S^*\pa M ; \Sc\lrpar{\RR_+\times\RR}\otimes\GL_N\lrpar{\CC}}_0
\end{split}\end{equation}
with the zero subscript denoting that the restriction to zero in the $\RR_+$ factor depends only on the base variable in $S^*\pa M$. The second homomorphism comes from quantizing the product in \eqref{ComCase} into the b,c-calculus with a parameter $\eps$:
\begin{equation*}
	\lrspar{S^*\pa M ; \Sc\lrpar{\RR_+\times\RR}\otimes\GL_N\lrpar{\CC}}_0
	\xrightarrow{q_{\eps}}
	\lrspar{S^*\pa M ; \Psi^{-\infty, -\infty}_{\text{b,c}}\lrpar{\lrspar{0,1};\CC^N}}_0.
\end{equation*}
When $\eps = 0$ we have the commutative product and for positive $\eps$, the non-commutative (cf. \cite[Proposition 9.9, Theorem 9.5]{HHH}).
Hence the isomorphism between 
$\cK^0_{0,-\infty}(\phi)$ and $K^0_c\lrpar{T^*\pa M}$ is taken into the isomorphism between 
$\cK^0_{\mathrm{sc},-\infty}(\phi)$ and $K^0_c\lrpar{T^*\pa M}$ by the adiabatic limit.
\end{proof}

\section{Six term exact sequence} \label{sec:SixTerm}

The Atiyah-Singer Index Theorem induces natural maps 
\begin{equation*}
	K_c^q(T^*M/B) \ni [\sigma] \xrightarrow{I_q} [\ind_{AS}r_{\pa M}^*\sigma] \in K_c^{q+1}(T^*D/B)
\end{equation*}
and, together with the identifications $K_c^{q+1}(T^*D/B) \cong \cK_{\fB{\Phi}, -\infty}^{q+1}(\phi)$, these are maps $K_c^q(T^*M/B) \to \cK_{\fB{\Phi},-\infty}^{q+1}(\phi)$.
Just as in \cite[Thm. 6.3]{Melrose-Rochon}, they fit into a diagram
\begin{equation}\label{SixTermPhiB}
\xymatrix  {
 \cK^0_{\fB{\Phi},-\infty}\lrpar{\phi} \ar[r]^{i_0} & 
\cK^0_{\fB{\Phi}}\lrpar{\phi} \ar[r]^{\sigma_0} &
K^0_c\lrpar{T^*M/B} \ar[d]^{I_0} \\
 K^1_c\lrpar{T^*M/B} \ar[u]^{I_1} &
\cK^1_{\fB{\Phi}}\lrpar{\phi} \ar[l]^{\sigma_1} &
\cK^1_{\fB{\Phi},-\infty}\lrpar{\phi} \ar[l]^{i_1}  }
\end{equation}
with the maps $i_*$ and $\sigma_*$ induced by inclusion and the principal interior symbol, which we now show is exact.

\begin{proposition}
The diagram \eqref{SixTermPhiB} is exact.
\end{proposition}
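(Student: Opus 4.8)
The plan is to establish exactness of \eqref{SixTermPhiB} one node at a time, following the template of \cite[Thm. 6.3]{Melrose-Rochon} but using the tools already assembled in this paper in place of the $C^*$-algebra machinery available there. Exactness at $\cK^0_{\fB{\Phi}}(\phi)$ and $\cK^1_{\fB{\Phi}}(\phi)$ (the "symbol" nodes) is the soft part: the composite $\sigma_q \circ i_q$ is zero because a perturbation of the identity by a smoothing operator has principal symbol the identity, and conversely if $[\sigma(A)]$ is trivial in $K^q_c(T^*M/B)$ then, after stabilizing and applying a homotopy realizing that triviality of the symbol, $A$ may be deformed within the Fredholm $\fB{\Phi}$ operators to one whose interior symbol is literally the identity, i.e.\ to an element of $A_{\fB{\Phi},-\infty}(M;E,F)$; this is the standard symbol-clutching argument and uses only that every elliptic symbol is quantizable and that the interior symbol sequence is exact. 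The main content is exactness at the two smoothing nodes $\cK^q_{\fB{\Phi},-\infty}(\phi)$, where the map $I_q$ enters.

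For that, I would first identify the maps concretely. By the Corollary following Theorem~\ref{MainThm} we have $\cK^q_{\fB{\Phi},-\infty}(\phi)\cong K^{q}_c(T^*D/B)$, and under the excision isomorphism of Lemma~\ref{ExcisionLemma} together with Theorem~\ref{Thm3Sc} an element of $\cK^q_{\fB{\Phi},-\infty}(\phi)$ is represented by the boundary family of a $\fB{\Phi}$-quantization, which after Fourier transform in the horizontal directions is a class in $K^q_c(T^*\pa M/B)$ that factors through $K^q_c(T^*D/B)$. The composite $\sigma_{q}\circ i_{q}=0$ already gives one half; for the reverse, suppose $B=\Id+A$ with $A\in\Psi^{-\infty}_{\fB{\Phi}}$ Fredholm and $[\sigma(B)]=[\Id]$ is in the image of $I_{q-1}$, say $[\sigma(B)]=I_{q-1}[\sigma']$. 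Then $[\sigma']$ has an Atiyah--Singer index equal (in $K^q_c(T^*D/B)$) to the class of $B$; choosing a Fredholm $\fB{\Phi}$-quantization $B'$ of $\sigma'$ realizing this, $B\oplus (B')^{-1}$ is a perturbation of the identity whose boundary class is trivial, so $[B]=i_q[\text{something}]$ — more precisely one shows $\ker\sigma_q\subseteq\operatorname{im} i_q$ by deforming the symbol to the identity and $\ker I_{q-1}\subseteq\operatorname{im}(\text{restriction to }\cK_{-\infty})$ by the families index argument of Proposition~\ref{SameIndex}. Exactness at $K^q_c(T^*M/B)$, i.e.\ $\operatorname{im}\sigma_q=\ker I_q$, is where the real work sits: $I_q[\sigma]=0$ means $\operatorname{ind}_{AS}(r^*_{\pa M}\sigma)$ vanishes, which by the Atiyah--Singer family index theorem means $r^*_{\pa M}\sigma$ has a Fredholm quantization on $\pa M$ with vanishing index bundle, hence (stabilizing) an invertible quantization; that invertible boundary model is exactly the data needed to promote a $\fB{\Phi}$-quantization of $\sigma$ to a fully elliptic one, so $[\sigma]\in\operatorname{im}\sigma_q$. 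Conversely if $[\sigma]=\sigma_q[B]$ with $B$ fully elliptic, its normal operator $N_{\fB{\Phi}}(B)$ is invertible, which forces $r^*_{\pa M}\sigma$ to admit an invertible (hence index-zero) quantization, so $I_q[\sigma]=0$.

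The step I expect to be the main obstacle is precisely the identification of the connecting map with $I_q$ and the verification that invertibility of the $\fB{\Phi}$-normal operator is \emph{equivalent} (not merely sufficient) to the vanishing of $\operatorname{ind}_{AS}(r^*_{\pa M}\sigma)$. In the $\fC{\Phi}$ case of \cite{Melrose-Rochon} this goes through the asymptotically commutative structure and a clean KK-theoretic boundary map; here the normal operator lives in the non-commutative suspended calculus $\Psi^*_{N\mathrm{sus}}({}^{\fB{\Phi}}N\pa X;E,F)$ modeled on $\bbR^+\ltimes\bbR^h$, so one must instead run the argument through the adiabatic isomorphism of Theorem~\ref{MainThm}: transport the whole diagram to the $\fC{\Phi}$ side via $\ad$ (which by Theorem~\ref{AdMap} commutes with $\sigma$, the inclusion $i_*$, and the index), invoke exactness of the $\fC{\Phi}$ six-term sequence from \cite[Thm. 6.3]{Melrose-Rochon}, and conclude exactness of \eqref{SixTermPhiB} by naturality and the five lemma. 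Indeed this last route is the cleanest: once $\ad$ is known to be an isomorphism of the $-\infty$ and full groups and to intertwine all structure maps, exactness of \eqref{SixTermPhiB} is immediate from exactness of its $\fC{\Phi}$ counterpart, and the direct arguments above serve only to pin down the maps $I_q$ as the Atiyah--Singer index maps. I would present the proof in that order: first match the maps, then deduce exactness from the $\fC{\Phi}$ case via $\ad$.
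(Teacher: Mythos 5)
Your node-by-node sketch in the first two paragraphs is broadly in the spirit of the paper's proof: the paper likewise treats the symbol nodes as immediate from the definitions, and its one substantive device is to make $I_1$ concrete by taking the Melrose--Rochon family $P_t \in \Psi^0_{\fC{\Phi}}(M/B;\bbC^N)$ (with $P_0 = \Id$, $\sigma(P_t) = \sigma_t$, and $P_1$ a Fredholm smoothing perturbation of the identity), lifting it through the adiabatic calculus and setting $\wt P_t = R_1(P_t(\eps))$; once $I_1[\sigma_t] = [\wt P_1]$ is established (using Theorem \ref{Thm3Sc} to match the identifications with $K_c(T^*D/B)$), exactness at $K^1_c(T^*M/B)$ and at $\cK^0_{\fB{\Phi},-\infty}(\phi)$ both reduce to reading off when the path $\wt P_t$ closes up to a loop of Fredholm operators. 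Your direct argument at $K^q_c(T^*M/B)$ via ``vanishing index bundle implies an invertible quantization of the boundary symbol'' is shakier than you suggest, since the normal operator lives in the noncommutative suspended calculus and the equivalence you want is precisely what the lifted family encodes; also $B \oplus (B')^{-1}$ does not make sense for a merely Fredholm $B'$.

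The genuine gap is in the route you declare cleanest and say you would actually follow: deducing exactness of \eqref{SixTermPhiB} from the exact $\fC{\Phi}$ sequence by transporting along $\ad$ and invoking Theorem \ref{MainThm}. In the architecture of this paper that is circular: the isomorphism $\cK^q_{\fC{\Phi}}(\phi) \xrightarrow{\ad} \cK^q_{\fB{\Phi}}(\phi)$ on the full groups is itself obtained from the five lemma applied to the two six-term sequences, i.e., it has the exactness of \eqref{SixTermPhiB} as a hypothesis. At the point where this proposition must be proved, the only available facts are that $\ad$ is an isomorphism on the $-\infty$ groups (Theorem \ref{Thm3Sc} plus Lemma \ref{ExcisionLemma}) and that it intertwines $\sigma$, $i_*$ and the index (Theorem \ref{AdMap}). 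That suffices to transfer exactness at the $K_c(T^*M/B)$ nodes, since $\nul I_q = \Im\sigma^{\fC{\Phi}}_q \subseteq \Im \sigma^{\fB{\Phi}}_q$, but not at $\cK^q_{\fB{\Phi}}(\phi)$ (one would need surjectivity of $\ad$ on the full groups to get $\nul\sigma_q \subseteq \Im i_q$) nor at $\cK^q_{\fB{\Phi},-\infty}(\phi)$ (one would need injectivity of $\ad$ on the full groups to get $\nul i_q \subseteq \Im I_{q+1}$). So the transfer cannot replace the direct argument; exactness at those nodes has to be proved by hand, which is exactly what the explicit family $\wt P_t$ accomplishes.
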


\begin{proof}
While exactness at $\cK^*_{\fB{\Phi}}(\phi)$ follows directly from the definitions, to establish exactness elsewhere we give a different description of the maps $I_q$.

Given a class $[\sigma_t] \in K^1_c(T^*M/B)$, it is shown in \cite[Proposition 6.2]{Melrose-Rochon} that there is a family of operators
\begin{equation*}
\begin{gathered}
	P_t \in \Psi^0_{\fC{\Phi}}(M/B; \bbC^N) \text{ and Fredholm, such that} \\
	\sigma(P_t) = \sigma_t, \quad
	P_0 = \Id, \quad
	P_1 \in ( \Id + \Psi_{\fC{\Phi}}^{-\infty}(M/B; \bbC^N) ) \text{ and Fredholm}
\end{gathered}
\end{equation*}
and the identification $\cK_{\fC{\Phi}, -\infty}^{0}(\phi) \cong K_c^{0}(T^*D/B)$ takes $[P_1]$ to $ [\ind_{AS}r_{\pa M}^*\sigma]$.
As in the proof of Theorem \ref{AdMap}, we can lift the family $P_t$ to a family of operators in the adiabatic calculus $P_t(\eps)$ such that $\wt P_t = R_1(P_t(\eps))$ satisfies
\begin{equation*}
\begin{gathered}
	\wt P_t \in \Psi^0_{\fB{\Phi}}(M/B; \bbC^N) \text{ and Fredholm, such that} \\
	\sigma(\wt P_t) = \sigma_t, \quad
	\wt P_0 = \Id, \quad
	\wt P_1 \in ( \Id + \Psi_{\fB{\Phi}}^{-\infty}(M/B; \bbC^N) ) \text{ and Fredholm}
\end{gathered}
\end{equation*}
and, from Theorem \ref{Thm3Sc},  the identification $\cK_{\fB{\Phi}, -\infty}(\phi) \cong K_c(T^*D/B)$ takes $[\wt P_1]$ to $ [\ind_{AS}r_{\pa M}^*\sigma]$.
Thus we can think of $I_1$ as being $[\sigma_t] \mapsto [\wt P_1]$.

An element $[\sigma_t]$ is in the null space of $I_1$ if $\wt P_1 \sim \Id$, i.e., precisely when $[\sigma_t]$ is in the image of the map $K^1_{\fB{\Phi}}(\phi) \xrightarrow{\sigma_1} K_c^1(T^*M/B)$.
On the other hand the homotopy $\wt P_t$ shows that $[\wt P_1]$ is automatically trivial as an element $K^0_{\fB{\Phi}}(\phi)$, that is, $\Im I_1 \subseteq \nul i_0$,
and conversely, if $[Q] \in \nul i_0$, there is (after stabilization) a homotopy of operators from the identity to $Q$, and the symbol of this homotopy defines an element in $K_c^1(T^*M/B)$ that is sent by $I_1$ to $[Q]$.
Thus we see that the diagram is exact at $K_c^1(T^*M/B)$ and at $\cK_{\fB{\Phi}}^1(\phi)$. 
Just as in \cite[Theorem 6.3]{Melrose-Rochon} the same arguments prove exactness at $K_c^1(T^*M/B)$ and $\cK_{\fB{\Phi}}^1(\phi)$ using Bott periodicity. 
\end{proof}

As explained in the proof of this proposition, the diagrams
\begin{equation*}
	\xymatrix{ K^q_c(T^*M/B) \ar[r]^{I_q} \ar[rd]^{I_q} & \cK^{q+1}_{\fC{\Phi}}(\phi) \ar[d]^{\ad} \\ & \cK^{q+1}_{\fB{\Phi}}(\phi) }
\end{equation*}
are commutative. Hence the six term exact sequences for the smooth K-theory of $\fC{\Phi}$  and $\fB{\Phi}$ operators make up a commutative diagram:
\begin{equation*}
\xymatrix  @R=17pt @C=17pt {
\cK^0_{\fC{\Phi},-\infty}\lrpar{\phi} \ar[rr] \ar[rd]^{\ad} & & 
\cK^0_{\fC{\Phi}}\lrpar{\phi} \ar[rr] \ar[d]^{\ad} & & 
K^0_c\lrpar{T^*M/B} \ar@{->}[ddd] \ar@{<->}[ld]^= \\
& \cK^0_{\fB{\Phi},-\infty}\lrpar{\phi} \ar[r] & 
\cK^0_{\fB{\Phi}}\lrpar{\phi} \ar[r] &
K^0_c\lrpar{T^*M/B} \ar[d] & \\
& K^1_c\lrpar{T^*M/B} \ar[u] &
\cK^1_{\fB{\Phi}}\lrpar{\phi} \ar[l] &
\cK^1_{\fB{\Phi},-\infty}\lrpar{\phi} \ar[l] & \\
K^1_c\lrpar{T^*M} \ar@{->}[uuu] \ar@{<->}[ur]^= & &
\cK^1_{\fC{\Phi}}\lrpar{\phi} \ar[ll] \ar[u]^{\ad} & &
\cK^1_{\fC{\Phi},-\infty}\lrpar{\phi} \ar[ll] \ar[lu]^{\ad} \\}
\end{equation*}
and, since we have already shown that $\cK^{q}_{\fC{\Phi}}(\phi) \xrightarrow{\ad} \cK^{q}_{\fB{\Phi}}(\phi)$ is an isomorphism, the five-lemma implies the following.

\begin{theorem}
The adiabatic calculus induces an isomorphism of the smooth K-theory groups of the $\fC{\Phi}$ calculus and those of the $\fB{\Phi}$ calculus,
\begin{equation*}
	\cK^{q}_{\fC{\Phi}}(\phi) \xrightarrow[\cong]{\ad} \cK^{q}_{\fB{\Phi}}(\phi).
\end{equation*}
\end{theorem}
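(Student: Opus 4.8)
The plan is to obtain the theorem as a purely formal consequence of what is already in place, by applying the five lemma to a morphism of six term exact sequences. By Theorem~\ref{AdMap}, the adiabatic construction provides a natural map $\ad\colon\cK^q_{\fC{\Phi}}\lrpar{\phi}\to\cK^q_{\fB{\Phi}}\lrpar{\phi}$ which restricts to the smoothing subgroups, commutes with the interior symbol maps $\sigma_q$ to $K^q_c\lrpar{T^*M/B}$, and commutes with the analytic index maps to $K^q\lrpar{B}$; moreover the proof of the preceding proposition exhibits the compatibility of the connecting homomorphisms $I_q$ on the two sides, obtained by lifting the exhibiting families $P_t\in\Psi^0_{\fC{\Phi}}\lrpar{M/B;\bbC^N}$ through the adiabatic calculus exactly as in the proof of Theorem~\ref{AdMap}. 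Assembling these statements, $\ad$ defines, square by square, a map of the six term exact sequence of the $\fC{\Phi}$ calculus onto that of the $\fB{\Phi}$ calculus; this is precisely the big commutative diagram displayed just above the theorem.

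Next I would record that the vertical arrows at four of the six nodes are already known to be isomorphisms. At the two symbol nodes $K^0_c\lrpar{T^*M/B}$ and $K^1_c\lrpar{T^*M/B}$ the vertical arrow is the identity, because $\ad$ preserves the interior symbol. At the two smoothing nodes $\cK^q_{\fC{\Phi},-\infty}\lrpar{\phi}\to\cK^q_{\fB{\Phi},-\infty}\lrpar{\phi}$ the vertical arrow is an isomorphism: by the Excision Lemma~\ref{ExcisionLemma} this reduces to the scattering-to-zero case, and Theorem~\ref{Thm3Sc} then shows that there the adiabatic homomorphism carries the identification $\cK^q_{\mathrm{sc},-\infty}\lrpar{\phi}\cong K^q_c\lrpar{T^*\pa M/B}$ into the identification $\cK^q_{0,-\infty}\lrpar{\phi}\cong K^q_c\lrpar{T^*\pa M/B}$, so it is an isomorphism.

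Finally I would run the five lemma at the two remaining nodes. Unrolling the period-six cyclic exact sequence around $\cK^0_{\fB{\Phi}}\lrpar{\phi}$, its two neighbours to the left are $\cK^0_{\fC{\Phi},-\infty}\lrpar{\phi}$ (immediately, via $i_0$) and $K^1_c\lrpar{T^*M/B}$ (via $I_1$), and its two neighbours to the right are $K^0_c\lrpar{T^*M/B}$ (immediately, via $\sigma_0$) and $\cK^1_{\fC{\Phi},-\infty}\lrpar{\phi}$ (via $I_0$), all four of which carry isomorphisms (hence in particular epimorphisms on the left and monomorphisms on the right); the five lemma therefore forces $\ad$ to be an isomorphism on $\cK^0_{\fB{\Phi}}\lrpar{\phi}$, and symmetrically on $\cK^1_{\fB{\Phi}}\lrpar{\phi}$. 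That is the assertion of the theorem.

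I do not expect a genuine obstacle at this stage: all of the analytic content has been spent already, in the construction of the adiabatic calculus and its closure under composition, in Proposition~\ref{SameIndex}, and in the identification of Theorem~\ref{Thm3Sc}. The one place needing care is purely diagrammatic — checking that one really has a \emph{morphism} of exact sequences, and in particular that the connecting maps $I_q$ on the two sides agree after applying $\ad$ on the nose rather than only up to homotopy. This is precisely what the explicit adiabatic lift of the families $P_t$ supplies, and it has already been carried out in the proof of the six term proposition.
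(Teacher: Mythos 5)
Your proposal is correct and is essentially the paper's own argument: the theorem is deduced by applying the five lemma to the displayed commutative diagram of six term exact sequences, using that $\ad$ is the identity on the symbol groups $K^q_c\lrpar{T^*M/B}$ and an isomorphism on the smoothing subgroups by Lemma~\ref{ExcisionLemma} and Theorem~\ref{Thm3Sc}. Your unrolling of the cyclic sequence around $\cK^0_{\fB{\Phi}}\lrpar{\phi}$ and your identification of where the commutativity of the connecting maps $I_q$ is established (in the proof of the exactness proposition, via the adiabatic lift of the families $P_t$) match the paper's intent precisely.
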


\appendix
\section{Triple adiabatic space} 

In this section we prove the composition formula for the adiabatic calculus. 
The kernel of the composition is given by an appropriate interpretation of 
\begin{equation*}
	\cK_{A\circ B} \lrpar{ \zeta, \zeta', \eps} = 
	\int_{M^2_{\eps\phi}} \cK_A \lrpar{ \zeta, \zeta'', \eps} \cK_B \lrpar{\zeta'', \zeta', \eps}.
\end{equation*}
Our method is geometric, we construct a space $M^3_{\eps\phi}$ with three b-fibrations `first', `second', and `composite' down to $M^2_{\eps\phi}$. The kernel of the composite is then correctly given by
\begin{equation*}
	\lrpar{\pi_C}_*\lrpar{\pi_F^*A \cdot \pi_S^*B}.
\end{equation*}

The basic triple space is $M^3\times[0,\epsilon _0]$ where $\epsilon _0>0$
and $M$ is a compact manifold with boundary and a specified fibration $\phi
:\pa M\longrightarrow Y$ of the boundary. We first have to perform the
`boundary adiabatic' blow ups to be able to map back to the single and
double spaces
\begin{equation}
\begin{gathered}
M^3_{(1)}=[M^3\times[0,\epsilon _0],A_T,A_F,A_C,A_S,A_R,A_M,A_L],\\
A_T=(\pa M)^3\times\{0\},\
A_F=M\times\pa M\times\pa M\times\{0\},\\
A_S=\pa M\times\pa M\times M\times\{0\},\
A_C=\pa M\times M\times\pa M\times\{0\},\\
A_R=M\times M\times\pa M\times\{0\},\
A_M=M\times\pa M\times M\times\{0\},\\
A_L=\pa M\times M\times M\times\{0\}.
\end{gathered}
\label{Afbct.1}\end{equation}

Then consider the lifts of the triple and double fiber diagonals, lying
within the lifts of the faces $(\pa M)^3\times[0,\epsilon _0]$ and 
$(\pa M)^2\times M\times[0,\epsilon _0]$ and its cyclic images. We can
denote these as $\df{B}_T,$ the triple boundary fibred diagonal and $\df{B}_S,$
$\df{B}_C$ and $\df{B}_F,$ the double fibered diagonals. These are all $p$-submanifolds,
meeting in the standard way for diagonals so we may define 
\begin{equation}
M^3_{(2)}=[M^3_{(1)},\df{B}_T,\df{B}_S,\df{B}_C,\df{B}_F].
\label{Afbct.2}\end{equation}

The double and triple fibered-cusp diagonal faces are not $p$-submanifolds in
$M_{(1)}$ but lift to be $p$-submanifolds, which we denote $\df{F}_T,$ $\df{F}'_F,$
$\df{F}'_C,$ $\df{F}'_S$ and $\df{F}''_F,$ $\df{F}''_C$ and $\df{F}''_F.$ Here, $\df{F}'_O$ is the lift
of the corresponding double diagonal in the face produced by the blow-up of
$A_O,$ $O=S,C,F$ and $\df{F}''_O$ is the lift of the intersection of this
submanifold under the blow-up of $A_T.$ The intersection properties of
these submanifolds is essentially the same as for the fibered-cusp (or
indeed the cusp) setting itself. Thus we complete the definition by blowing
up in `the usual' order 
\begin{equation}
M^3_{\epsilon \phi }=[M^3_{(2)},\df{F}_T,\df{F}''_S,\df{F}''_C,\df{F}''_F,\df{F}'_S,\df{F}'_C,\df{F}'_F].
\label{Afbct.3}\end{equation}

\begin{proposition}\label{Afbct.4} Each of the projections, dropping one or
two factors of $M,$ lift to b-fibrations 
\begin{equation}
\begin{gathered}
\xymatrix@1{
{\begin{pmatrix}
\pi_F
\\
\pi_C
\\
\pi_S
\end{pmatrix}}
:
M^3_{\epsilon\phi}\ar[r]\ar@<1ex>[r]\ar@<-1ex>[r]
&
M^2_{\epsilon\phi}
},\\
\xymatrix@1{
{\begin{pmatrix}
\pi_L
\\
\pi_M
\\
\pi_R
\end{pmatrix}}
:
M^3_{\epsilon\phi}
\ar[r]\ar@<1ex>[r]\ar@<-1ex>[r]
&
M_{\epsilon}.}
\end{gathered}
\label{Afbct.5}\end{equation}
\end{proposition}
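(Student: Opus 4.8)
The statement is a standard (if intricate) fact in the theory of blow-ups and b-fibrations: once one has the iterated blow-up description \eqref{Afbct.1}--\eqref{Afbct.3} and the analogous description of $M^2_{\eps\phi}$, the projections $\pi_F,\pi_C,\pi_S$ and $\pi_L,\pi_M,\pi_R$ are built by checking that each centre blown up upstairs maps into (the lift of) a centre blown up downstairs, or into a boundary face, or is a transversal lift, and then invoking the commutativity of blow-ups together with the characterization of b-maps and b-fibrations in terms of local coordinates (cf. \cite{Corners}). So the plan is: fix one of the three ``composite-type'' projections, say $\pi_C:M^3\times[0,\eps_0]\to M^2\times[0,\eps_0]$ dropping the middle factor of $M$, and show it lifts through every stage of \eqref{Afbct.1}--\eqref{Afbct.3}; the other two composite projections follow by the obvious cyclic symmetry of the centres, and the three ``single-type'' projections $\pi_L,\pi_M,\pi_R$ to $M_\eps$ are handled the same way but are strictly easier because $M_\eps$ involves only the single boundary-adiabatic blow-up $\pa M\times\{0\}$.

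\textbf{Step 1 (boundary-adiabatic stage).} First track $\pi_C$ through the blow-ups in \eqref{Afbct.1}. The seven centres $A_T,A_F,A_C,A_S,A_R,A_M,A_L$ are intersections of preimages of $\pa M$ in various factors with $\{\eps=0\}$; under dropping the middle factor, $A_T\mapsto (\pa M)^2\times\{0\}$, $A_C\mapsto(\pa M)^2\times\{0\}$, $A_F\mapsto M\times\pa M\times\{0\}$, $A_S\mapsto\pa M\times M\times\{0\}$, while $A_R,A_M,A_L$ map onto $M^2\times\{0\}$, i.e.\ onto the whole fibre (so their blow-up becomes inessential for this projection). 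One then uses that all seven centres are p-submanifolds meeting in the ``intersecting'' pattern, so the blow-ups commute and may be reordered to do $A_T,A_C,A_F,A_S$ first; these four map down to the four boundary-adiabatic centres of $M^2_{(1)}$ (namely $(\pa M)^2\times\{0\}$, $M\times\pa M\times\{0\}$, $\pa M\times M\times\{0\}$ — with $A_T,A_C$ both sitting over the corner), so by the pull-back/lifting lemma for blow-ups $\pi_C$ lifts to a b-map $M^3_{(1)}\to M^2_{(1)}$; the remaining blow-ups $A_R,A_M,A_L$ have centres mapping onto the base of the $M^2_{(1)}\to\{\eps\}$ fibration and hence lift trivially (they do not affect the target).

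\textbf{Step 2 (diagonal blow-ups).} Next track $\pi_C$ through \eqref{Afbct.2} and \eqref{Afbct.3}. The double fibre diagonals $\df{B}_C$ and the double fibred-cusp diagonals $\df{F}'_C,\df{F}''_C$ are, by construction, exactly the lifts of the fibre diagonal and fibred-cusp diagonal in the factor that $\pi_C$ does \emph{not} collapse, so they map diffeomorphically (locally) onto $\df{B}$ and $\df{F}$ in $M^2_{\eps\phi}$; the triple diagonals $\df{B}_T,\df{F}_T$ lie over the corner and map onto $\df{B},\df{F}$ as well; and $\df{B}_S,\df{B}_F,\df{F}'_S,\df{F}'_F,\df{F}''_S,\df{F}''_F$ all map onto whole boundary faces of $M^2_{\eps\phi}$ (they are ``spectator'' diagonals for $\pi_C$) so their blow-up again does not change the target. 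Reordering the blow-ups to put the $C$- and $T$-diagonals first, one reads off from the projective coordinates (the $s=x/x'$, $S=(s-1)/x'$, $u_i=(y_i-y_i')/x'$ type coordinates of Proposition~\ref{DoubleAdSpace}, now with a third set of primed variables) that $\pi_C$ is a b-map, and then that it is a b-fibration: it carries no boundary hypersurface to a corner (each of the six boundary hypersurfaces of $M^3_{\eps\phi}$ maps onto a single boundary hypersurface of $M^2_{\eps\phi}$ or into the interior/whole space), and it is a fibration in the interior of each boundary face, which together with surjectivity of the differential — visible in the coordinates — is exactly the b-fibration criterion of \cite{Corners}.

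\textbf{Step 3 (the single-space projections and conclusion).} For $\pi_L,\pi_M,\pi_R:M^3_{\eps\phi}\to M_\eps$ the argument is the same but shorter: $M_\eps=[M\times[0,\eps_0];\pa M\times\{0\}]$ has only one blow-up, and all of the centres in \eqref{Afbct.1}--\eqref{Afbct.3} either map into $\pa M\times\{0\}$, or onto a boundary face of $M_\eps$, or onto all of $M_\eps$, so after the usual reordering each of these projections lifts to a b-map and then, by the same no-hypersurface-to-corner and interior-fibration checks, to a b-fibration. Finally, by the cyclic $\mathbb{Z}_3$ symmetry permuting the three factors of $M$ (which permutes $\{A_F,A_C,A_S\}$, $\{A_R,A_M,A_L\}$, $\{\df{B}_F,\df{B}_C,\df{B}_S\}$, $\{\df{F}'_F,\df{F}'_C,\df{F}'_S\}$, $\{\df{F}''_F,\df{F}''_C,\df{F}''_S\}$ and fixes $A_T,\df{B}_T,\df{F}_T$), the statements for $\pi_F$ and $\pi_S$ follow from that for $\pi_C$, and those for $\pi_L,\pi_R$ from that for $\pi_M$. \emph{The main obstacle} is bookkeeping: one must verify that the various diagonal centres genuinely lift to p-submanifolds at each stage and intersect in the model ``diagonal'' pattern that makes the blow-ups commute and reorderable — precisely the point where the parenthetical remark before \eqref{Afbct.3} (``the intersection properties\ldots\ are essentially the same as for the fibred-cusp setting'') is doing real work — so the honest content of the proof is checking the local coordinate normal forms near the triple intersections rather than any one lifting step in isolation.
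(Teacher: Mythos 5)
Your proof takes essentially the same route as the paper's, which disposes of the proposition in two sentences by appealing to the ``usual commutativity of blow-ups argument'': reorder the centres so that those lying over centres of the target are blown down compatibly, and check the b-fibration conditions (no hypersurface to a corner, fibration over the interior of each face). Your expansion of this is sound in structure, but there is one bookkeeping slip in Step 1: under $\pi_C$ (dropping the middle factor) only $A_M=M\times\pa M\times M\times\{0\}$ maps onto the whole fibre $M^2\times\{0\}$; the centres $A_R=M\times M\times\pa M\times\{0\}$ and $A_L=\pa M\times M\times M\times\{0\}$ map onto $M\times\pa M\times\{0\}$ and $\pa M\times M\times\{0\}$ respectively --- indeed they are precisely the $\pi_C$-preimages of the two side centres blown up in forming $M^2_{(2)}$, so their blow-up is not ``inessential'' but is exactly what is needed (together with $A_C=\pi_C^{-1}((\pa M)^2\times\{0\})$) for the lift to exist; the genuinely inessential centres for $\pi_C$ at this stage are $A_T,A_F,A_S,A_M$, whose images are contained in faces already resolved downstairs. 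A similar small imprecision occurs in Step 2: $\df{B}_C$ does not map locally diffeomorphically onto $\df{B}$ but fibres over it with fibre $M$; what matters is again that it is the preimage of the centre $\df{B}$. Neither point affects the validity of the argument, and the conclusion stands as you state it.
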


\begin{proof} This is the usual commutativity of blow ups
  argument. Rearranging the `fibred-cusp' faces works just as for the
  (fibered-) cusp calculus. After these triple and two double faces have
  been blown down, the corresponding fibered-boundary faces can be blown
  down, as can the pure adiabatic faces.
\end{proof}

\begin{proposition}\label{AdComposition}
\begin{equation*}
	\Psi^k_{\eps\phi}\lrpar{M;G,H} \circ 
	\Psi^{k'}_{\eps\phi}\lrpar{M;E,G} \subset
	\Psi^{k+k'}_{\eps\phi}\lrpar{M;E,H}
\end{equation*}
\end{proposition}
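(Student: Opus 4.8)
The plan is to prove the composition formula by the standard geometric (push-forward/pull-back) method, using the triple adiabatic space $M^3_{\eps\phi}$ constructed above together with the b-fibrations of Proposition~\ref{Afbct.4}. First I would recall that a kernel $A\in\Psi^k_{\eps\phi}(M;G,H)$ is, by definition, an element of the space $I^{k-1/4}(M^2_{\eps\phi},\diag_{\eps\phi};\Hom(G,H)\otimes\Omega_{\eps\phi})$ vanishing to infinite order at all boundary faces not meeting the lifted diagonal (the four faces listed in the definition: the lifts of $\pa M\times M$ and $M\times\pa M$ over both $[0,\eps_0]$ and $\{0\}$). The claim is then that $\cK_{A\circ B}=(\pi_C)_*(\pi_F^*A\cdot\pi_S^*B)$ lies in the same space on $M^2_{\eps\phi}$. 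So the proof reduces to three things: (i) the product $\pi_F^*A\cdot\pi_S^*B$ is a well-defined conormal-times-smooth object on $M^3_{\eps\phi}$ of the correct order; (ii) the push-forward $(\pi_C)_*$ is defined and lands in the conormal class of the right order on $M^2_{\eps\phi}$; and (iii) the resulting kernel vanishes to infinite order at the four "off-diagonal" faces of $M^2_{\eps\phi}$.

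For step (i) I would note that $\pi_F$ and $\pi_S$ are b-fibrations transverse to the relevant diagonals (this is the triple-space analogue of Proposition~\ref{DoubleAdSpace}, and follows from the same local coordinate computations), so $\pi_F^*A$ and $\pi_S^*B$ are each conormal to a p-submanifold of $M^3_{\eps\phi}$; these two p-submanifolds (the lifts of $\diag^{(12)}\times[0,\eps_0]$ and $\diag^{(23)}\times[0,\eps_0]$, interpreted as the "partial diagonals" $\pi_F^{-1}\diag_{\eps\phi}$ and $\pi_S^{-1}\diag_{\eps\phi}$) intersect transversally inside $M^3_{\eps\phi}$, with intersection equal to $\pi_C^{-1}\diag_{\eps\phi}$, the lift of the full triple diagonal. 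Hence the product is conormal to the union of these, of order $(k-1/4)+(k'-1/4)$ in the appropriate normalization, and the $\eps\phi$-half-density bundles compose correctly because $\pi_F^*\Omega_{\eps\phi}\otimes\pi_S^*\Omega_{\eps\phi}$ differs from $\pi_C^*\Omega_{\eps\phi}$ times the b-density along the fibres of $\pi_C$ by a smooth positive factor --- exactly the point of building the half-density bundle $\Omega_{\eps\phi}$ into the definition. For step (ii), Proposition~\ref{Afbct.4} gives that $\pi_C$ is a b-fibration, and the support condition from the vanishing of $A$ and $B$ ensures that $\pi_F^*A\cdot\pi_S^*B$ vanishes to infinite order at every boundary hypersurface of $M^3_{\eps\phi}$ that $\pi_C$ maps \emph{into} a boundary hypersurface of $M^2_{\eps\phi}$ that does \emph{not} meet the diagonal, and in particular at every face mapped by $\pi_C$ to a corner; therefore the push-forward theorem of \cite{Corners} applies and produces a polyhomogeneous conormal kernel of the expected order, with no logarithmic terms, and with the correct vanishing (step (iii)) read off from the index-set calculus under push-forward --- each "bad" face of $M^2_{\eps\phi}$ has preimage under $\pi_C$ consisting of faces at which the integrand is either rapidly vanishing or is pushed forward off into a rapidly vanishing contribution.

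The main obstacle, and the part deserving genuine care rather than citation, is verifying that the triple space $M^3_{\eps\phi}$ as defined in \eqref{Afbct.1}--\eqref{Afbct.3} actually has all the properties claimed: that the listed submanifolds really do lift to p-submanifolds in the stated order (so that the iterated blow-up is well-defined and independent of admissible reorderings within each block), and --- the crucial combinatorial point --- that each of the six projections lifts to a \emph{b-fibration}, not merely a b-map. Proposition~\ref{Afbct.4} asserts this "by the usual commutativity of blow-ups argument", and the real content of the composition proof is unwinding that: one must check, face by face, that no boundary hypersurface of $M^3_{\eps\phi}$ is collapsed to higher codimension by $\pi_F$, $\pi_S$ or $\pi_C$, and that $\pi_C^{-1}\diag_{\eps\phi}$ is an interior p-submanifold transverse to the fibres of $\pi_C$ so that the "transverse push-forward" form of the theorem applies and preserves the conormal order exactly (the $-1/4$ shift bookkeeping). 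I would handle this exactly as in the fibred-cusp case \cite{Mazzeo-Melrose1} and in \cite[Appendix C]{Melrose-Rochon}, checking the new adiabatic faces ($A_T$ through $A_L$ and the pure-adiabatic front faces at $\eps=0$) explicitly in projective coordinates analogous to those in the proof of Proposition~\ref{DoubleAdSpace}, and observing that the extra parameter $\eps$ enters every relevant blow-up as an "innocent" transverse variable, so the adiabatic front faces behave like the product of an interval with the corresponding fibred-cusp triple-space faces. Once the b-fibration property is in hand, the composition statement follows formally from \cite{Corners} as above, with only notational changes for the bundle coefficients $E,G,H$.
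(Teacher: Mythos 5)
Your proposal is correct and follows essentially the same route as the paper: pull back the two kernels to the triple space $M^3_{\eps\phi}$ under the stretched projections, multiply, push forward under $\pi_C$ using the b-fibration property from Proposition~\ref{Afbct.4} and the pull-back/push-forward theorems of \cite{Corners}, and observe that rapid vanishing at all faces not meeting the triple diagonal means only the faces arising from $\diag(M^3)\times\{0\}$, $\df{B}_T$ and $\df{F}_T$ contribute. You are also right that the genuine content lies in verifying the b-fibration claim of Proposition~\ref{Afbct.4}, which the paper itself treats only by appeal to the usual commutativity-of-blow-ups argument; your more explicit accounting of transversality, density factors and index sets is a faithful expansion of the same argument rather than a different one.
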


\begin{proof}
This follows from Proposition \ref{Afbct.4} via the push-forward and pull-back theorems of \cite{Corners}.
Indeed, given two operators (acting on half-densities for simplicity) $A$ and $B$ the kernel of their composition is given by
\begin{equation*}
	A\circ B = \lrpar{\pi_C}_*\lrpar{\pi_S^*A\cdot \pi_F^*B}.
\end{equation*}
As the maps are b-fibrations and transversal to the diagonals this yields a distribution conormal to the diagonal of the appropriate degree. 
Since the kernels of $A$ and $B$ vanish to infinite order at every face not meeting the diagonal, the product of their lifts will vanish to infinite order at every face not meeting the triple diagonal (as this is the intersection of the lift of the two diagonals). Hence only the faces coming from $\diag\lrpar{M^3} \times \{0\}$, and the blow-ups of $\df{B}_T$ and $\df{F}_T$ potentially contribute to the push-forward. It follows that the resulting distribution is smooth down to the each of the boundary faces.
\end{proof}

\bibliography{fresmb}

\def\cprime{$'$} \def\cprime{$'$} \def\cdprime{$''$} \def\cprime{$'$}
  \def\cprime{$'$} \def\cprime{$'$} \def\cprime{$'$} \def\bud{$''$}
  \def\cprime{$'$} \def\cprime{$'$} \def\cprime{$'$} \def\cprime{$'$}
  \def\cprime{$'$} \def\cprime{$'$} \def\cprime{$'$} \def\cprime{$'$}
  \def\cprime{$'$} \def\cprime{$'$}
  \def\polhk#1{\setbox0=\hbox{#1}{\ooalign{\hidewidth
  \lower1.5ex\hbox{`}\hidewidth\crcr\unhbox0}}} \def\cprime{$'$}
  \def\cprime{$'$} \def\cprime{$'$} \def\cprime{$'$} \def\cprime{$'$}
  \def\cprime{$'$} \def\cprime{$'$} \def\cprime{$'$} \def\cprime{$'$}
  \def\cprime{$'$} \def\cprime{$'$} \def\cprime{$'$} \def\cprime{$'$}
  \def\cprime{$'$} \def\cprime{$'$} \def\cprime{$'$} \def\cprime{$'$}
  \def\cprime{$'$} \def\cprime{$'$} \def\cprime{$'$} \def\cprime{$'$}
  \def\cprime{$'$} \def\cprime{$'$} \def\cprime{$'$} \def\cprime{$'$}
  \def\cprime{$'$} \def\cprime{$'$}
\providecommand{\bysame}{\leavevmode\hbox to3em{\hrulefill}\thinspace}
\providecommand{\MR}{\relax\ifhmode\unskip\space\fi MR }
\providecommand{\MRhref}[2]{%
  \href{http://www.ams.org/mathscinet-getitem?mr=#1}{#2}
}
\providecommand{\href}[2]{#2}
\begin{thebibliography}{10}

\bibitem{Albin-Melrose}
Pierre Albin and Richard~B. Melrose, \emph{Fredholm realizations of elliptic
  symbols on manifolds with boundary.}, J. Reine Angew. Math. \textbf{627}
  (2009), 155--181.

\bibitem{Atiyah-Bott}
M.~F. Atiyah and R.~Bott, \emph{The index problem for manifolds with boundary},
  Differential Analysis, Bombay Colloq., 1964, Oxford Univ. Press, London,
  1964, pp.~175--186.

\bibitem{EpsteinMelroseMendoza}
Charles~L. Epstein, Richard~B. Melrose, and Gerardo~A. Mendoza, \emph{Resolvent
  of the {L}aplacian on strictly pseudoconvex domains.}, Acta Math.
  \textbf{167} (1991), no.~1-2, 1--106.

\bibitem{HHH}
\bysame, \emph{The {H}eisenberg algebra, index theory and homology}, preprint,
  February 2000.

\bibitem{Graham}
C.~Robin Graham, \emph{Volume and area renormalizations for conformally compact
  {E}instein metrics}, The Proceedings of the 19th Winter School ``Geometry and
  Physics'' (Srn\'\i, 1999), no.~63, 2000, pp.~31--42.

\bibitem{Lauter}
Robert Lauter, \emph{Pseudodifferential analysis on conformally compact
  spaces.}, Mem. Amer. Math. Soc. \textbf{163} (2003), no.~777, xvi+92pp.

\bibitem{Mazzeo:Hodge}
Rafe Mazzeo, \emph{The {H}odge cohomology of a conformally compact metric}, J.
  Differential Geom. \textbf{28} (1988), no.~2, 309--339.

\bibitem{Mazzeo:Edge}
\bysame, \emph{Elliptic theory of differential edge operators. {I}}, Comm.
  Partial Differential Equations \textbf{16} (1991), no.~10, 1615--1664.

\bibitem{Mazzeo-Melrose1}
Rafe Mazzeo and Richard~B. Melrose, \emph{Pseudodifferential operators on
  manifolds with fibred boundaries}, Asian J. Math. \textbf{2} (1998), no.~4,
  833--866, Mikio Sato: a great Japanese mathematician of the twentieth
  century.

\bibitem{Corners}
Richard~B. Melrose, \emph{Differential analysis on manifolds with corners},
  http://www-math.mit.edu/$\sim$rbm/.

\bibitem{APSBook}
\bysame, \emph{The {A}tiyah-{P}atodi-{S}inger index theorem}, Research Notes in
  Mathematics, vol.~4, A K Peters Ltd., Wellesley, MA, 1993.

\bibitem{Melrose:Scat}
\bysame, \emph{Spectral and scattering theory for the {L}aplacian on
  asymptotically {E}uclidian spaces}, Spectral and scattering theory ({S}anda,
  1992), Lecture Notes in Pure and Appl. Math., vol. 161, Dekker, New York,
  1994, pp.~85--130. \MR{MR1291640 (95k:58168)}

\bibitem{Melrose-Rochon0}
Richard~B. Melrose and Fr{\'e}d{\'e}ric Rochon, \emph{Families index for
  pseudodifferential operators on manifolds with boundary}, Int. Math. Res.
  Not. (2004), no.~22, 1115--1141.

\bibitem{Melrose-Rochon}
\bysame, \emph{Index in {$K$}-theory for families of fibred cusp operators},
  $K$-Theory \textbf{37} (2006), no.~1-2, 25--104.

\bibitem{Rochon}
Fr{\'e}d{\'e}ric Rochon, \emph{Bott periodicity for fibred cusp operators}, J.
  Geom. Anal. \textbf{15} (2005), no.~4, 685--722.

\end{thebibliography}
\bibliographystyle{amsplain}

\end{document}